\newtheorem{theorem}{Theorem}[section]
\newtheorem{definition}{Definition}[section]
\newtheorem{lemma}{Lemma}[section]
\newtheorem{corollary}{Corollary}[section]
\newtheorem{proposition}{Proposition}[section]
\newtheorem{remark}{Remark}[section]
\journal{Journal Geometry and  Physics}
\begin{document}

\begin{frontmatter}

%% Title, authors and addresses

%% use the tnoteref command within \title for footnotes;
%% use the tnotetext command for theassociated footnote;
%% use the fnref command within \author or \affiliation for footnotes;
%% use the fntext command for theassociated footnote;
%% use the corref command within \author for corresponding author footnotes;
%% use the cortext command for theassociated footnote;
%% use the ead command for the email address,
%% and the form \ead[url] for the home page:
%% \title{Title\tnoteref{label1}}
%% \tnotetext[label1]{}
%% \author{Name\corref{cor1}\fnref{label2}}
%% \ead{email address}
%% \ead[url]{home page}
%% \fntext[label2]{}
%% \cortext[cor1]{}
%% \affiliation{organization={},
%%            addressline={},
%%            city={},
%%            postcode={},
%%            state={},
%%            country={}}
%% \fntext[label3]{}

\title{Riemannian flow techniques on totally geodesic null hypersurfaces} %% Article title

%% use optional labels to link authors explicitly to addresses:
%% \author[label1,label2]{}
%% \affiliation[label1]{organization={},
%%             addressline={},
%%             city={},
%%             postcode={},
%%             state={},
%%             country={}}
%%
%% \affiliation[label2]{organization={},
%%             addressline={},
%%             city={},
%%             postcode={},
%%             state={},
%%             country={}}

\author[a]{Manuel Gutiérrez\corref{ca}} %% Author name
\cortext[ca]{Corresponding author}
\ead{m_gutierrez@uma.es}

%% Author affiliation
\affiliation[a]{organization={Dto. Álgebra, Geometría y Topología. Universidad de Málaga},%Department and Organization
%          addressline={},
            city={Málaga},
%         postcode={29071},
%        state={},
            country={Spain}}

\author[b,c]{Raymond A. Hounnonkpe}

\ead{rhounnonkpe@ymail.com}

\affiliation[b]{organization={Université d’Abomey-Calavi},%Department and Organization
%            addressline={},
            city={Abomey-Calavi},
%            postcode={},
%            state={},
            country={Bénin}}

\affiliation[c]{organization={Institut de Mathématiques et de Sciences Physiques (IMSP)},%Department and Organization
%            addressline={},
            city={Porto-Novo},
%            postcode={},
%            state={},
            country={Bénin}}
%% Abstract
\begin{abstract}
We study the influence of the existence of totally geodesic null hypersurface on the properties of a Lorentzian manifold. By coupling the rigging technique with the existence of a null foliation we prove the existence of a Riemann flow structure which allows us to use powerful results to show how curvature conditions on the spacetime restricts its causal structure. We also study the existence of periodic null or spacelike geodesic.
\end{abstract}

%%Graphical abstract
%\begin{graphicalabstract}
%\includegraphics{grabs}
%\end{graphicalabstract}

%%Research highlights
%\begin{highlights}
%\item Research highlight 1
%\item Research highlight 2
%\end{highlights}

%% Keywords
\begin{keyword}
Riemannian flow \sep rigging technique \sep null hypersurface

%% keywords here, in the form: keyword \sep keyword

%% PACS codes here, in the form: \PACS code \sep code

%% MSC codes here, in the form: \MSC code \sep code
%% or \MSC[2008] code \sep code (2000 is the default)
\MSC[2008] 53C50 \sep 53B30 \sep 53C40.
\end{keyword}

\end{frontmatter}

%% Add \usepackage{lineno} before \begin{document} and uncomment
%% following line to enable line numbers
%% \linenumbers

%% main text
%%

%% Use \section commands to start a section
\section{Introduction}
\label{sec1}
Lorentz geometry has a development which is in principle analogous to that of
Riemannian geometry in many aspects. They share analogous metric structures
with corresponding notion of Levi-Civita connection, curvature, parallel
transport, geodesics, etc. However, the differences are dramatic in some
cases, being one of the most relevant the absence of an analogous to the
Hopf-Rinow theorem in Lorentzian geometry, or more recently the existence of compact Lorentzian manifolds without closed geodesics,
\cite{ABZ}. The interest in studying Lorentzian geometry primarily relies in its use in general relativity. From a geometrical point of view we are interested in exploring the properties and differences of analogous structures in both geometries and studying the new geometric structures arising which has not Riemannian analogy. In fact, submanifolds such that the induced metric is degenerated in some points are a category of objects in
Lorentzian geometry without Riemannian counterpart. The most obvious are the congruence of null geodesics and the family of null hypersurfaces. Both have been studied since a long time ago because they represent physical objects of maximum interest. Null
geodesics represent trajectories of free falling light like particles, and null hypersurfaces represents different kind of light fronts, which includes null cones and black hole horizons. Both are essential in modern
general relativity.

An important characteristic of null hypersupfaces is that they have a privileged direction at each point, the null direction at each tangent space. This means that manifolds furnishing a structure with a distinguished direction at each point can be used as a model to study then. Examples are
all kinds of local twisted (warped, or direct product) spaces with a one-dimensional
factor, contact structures, manifolds with a one-dimensional foliation, etc. The first two ones were studied in \cite{GB,AtiGutHouOle2022}.

It is not a surprise that this strategy works quite well allowing us to unveil valuable information on the geometric structure of these objects itself, and more importantly, showing a tuned link between these properties with the geometric properties of the ambient space.

In this paper, we study some consequences of the presence of totally geodesic null hypersurfaces in a Lorentzian manifold, whose existence is not guaranteed in general. For example, Friedmann spaces do not admit then, see \cite{GB}.  We use the rigging technique introduced by one of the authors in \cite{GB} as a general technique to study null hypersurfaces. We will relate curvature properties with causal properties of the Lorentzian manifold, extending previous results in \cite{AGH2018,AGH2019,AGR,AGH2021}, and we will also provide some kind of conditions to
guarantee the existence of periodic geodesics. The key fact is that the presence of a rigging vector field can be used to show that the null foliation in a totally geodesic null hypersurface becomes a Riemannian flow with bundle-like metric just the rigged metric.

\section{Riemannian flow on totally geodesic null hypersurfaces}

In this section we show that the rigged metric is tunned with the null foliation of a totally geodesic null hypersurface in the sense that it is bundle-like, and the transverse geometry is the geometry of the screen distribution with the rigged metric. This observation makes possible to relate the transverse curvature with the curvature of the ambient space.

We recall the following definition (see \cite{Mol, Tond} for details).

\begin{definition}
Let $M$ be a $n$-dimensional manifold furnished with a $p$-dimensional
foliation $F$ and let $Q=TM/TF$ be the quotient fiber bundle. \textbf{A Riemannian metric }$g_{T}$\textbf{ on }$Q$\textbf{ is
said to be holonomy invariant}, if
\[
L_{X}g_{T}=0,
\]
for any vector field $X$ tangent to $F$. We say that $(M,F)$
is a \textbf{Riemannian foliation}, if $Q$ admits a holonomy invariant metric.
It $p=1$, it is called a \textbf{Riemannian flow}.

\textbf{A Riemannian metric }$g$\textbf{ on }$TM$\textbf{ is said
bundle-like}, if the induced metric on $Q$ is holonomy invariant. In
this case, $Q$ can be identified with the orthogonal complement $TF^{\perp}$.
\end{definition}

\begin{remark}
A flow defined by a Killing vector field is a Riemannian flow \cite{Car}.
\end{remark}

\begin{definition}
\cite[Page 72]{Tond} A Riemannian flow $F$ on $M$ is \textbf{isometric} if
there exists a Riemannian metric $h$ on $M$ and a non singular Killing vector
field $X$ with respect to $h$ such that $F$ is given by the orbits of $X$.
\end{definition}

It is a well-known fact that the flow associated to a lightlike tangent vector
field on a totally geodesic null hypersurface is a Riemannian flow (see
\cite{Zeg1, Zeg}). For completeness, we give here a proof in the presence of a rigging vector field. All the concepts related to the rigging technique may be consulted in \cite{GB}.

\begin{lemma}
Let $(M,g)$ be a Lorentzian manifold and $L$ a totally geodesic null hypersurface with rigging $\zeta$. Then the null foliation in $L$ is a Riemannian flow for which the rigged metric
$\tilde{g}$ is bundle-like. If in addition the rigging $\zeta$ is closed then
the flow is isometric.
\end{lemma}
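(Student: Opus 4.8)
The plan is to work entirely on $L$ with the data produced by the rigging: the null generator $\xi$ spanning the radical and normalized by $\omega(\xi)=1$, where $\omega=g(\zeta,\cdot)|_L$; the screen distribution $\mathcal S=\ker\omega\cap TL$; and the rigged metric $\tilde g=g|_L+\omega\otimes\omega$. With this normalization $\tilde g(\xi,\xi)=1$ and $\tilde g(\xi,X)=0$ for $X\in\mathcal S$, so that $\xi$ is a $\tilde g$-unit field and $\mathcal S=T\mathcal F^{\perp}$, where $\mathcal F$ is the one-dimensional null foliation integrating $\xi$. By the holonomy-invariance condition in the Definition above (Reinhart's characterization of bundle-like metrics), it suffices to prove that $(L_\xi\tilde g)(X,Y)=0$ for all $X,Y$ tangent to the screen $\mathcal S$.

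First I would reduce the computation to the screen. Since $\omega$ vanishes on $\mathcal S$, the Leibniz expansion $L_\xi(\omega\otimes\omega)=(L_\xi\omega)\otimes\omega+\omega\otimes(L_\xi\omega)$ contributes nothing when evaluated on $X,Y\in\mathcal S$, so that $(L_\xi\tilde g)(X,Y)=(L_\xi g)(X,Y)$. Expanding with the ambient Levi-Civita connection gives $(L_\xi g)(X,Y)=g(\nabla_X\xi,Y)+g(\nabla_Y\xi,X)$, and each term is, up to sign, the null second fundamental form $B$ of $L$ evaluated on screen vectors. This is the crux of the argument: because $L$ is totally geodesic, $B\equiv 0$, hence $(L_\xi\tilde g)|_{\mathcal S\times\mathcal S}=0$ and $\tilde g$ is bundle-like, so $\mathcal F$ is a Riemannian flow. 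I expect this identification of the symmetric part of $L_\xi\tilde g$ with $B$, together with the correct translation of the holonomy-invariance condition into a Lie-derivative statement on $\mathcal S$, to be the main (and essentially the only) obstacle; everything else is bookkeeping with the rigging normalizations.

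For the isometric statement I would compute the remaining components of $L_\xi\tilde g$ on the splitting $TL=\mathbb R\,\xi\oplus\mathcal S$. The $(\xi,\xi)$-component is $\xi\big(\tilde g(\xi,\xi)\big)=\xi(1)=0$. For $X\in\mathcal S$ I would write $(L_\xi\tilde g)(\xi,X)=(L_\xi g)(\xi,X)+(L_\xi\omega)(X)$; using $g(\xi,\xi)=0$ and $\omega(\xi)=1$ this simplifies to $g(\nabla_\xi\xi,X)+d\omega(\xi,X)$. A short computation with the radical, namely $g(\nabla_\xi\xi,X)=-g(\xi,\nabla_X\xi)-g(\xi,[\xi,X])=0$ (the first term vanishes since $g(\xi,\xi)=0$, the second because $[\xi,X]\in TL$ lies in the radical kernel of $g$), shows that $\xi$ has no screen-acceleration, so $(L_\xi\tilde g)(\xi,X)=d\omega(\xi,X)$. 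Thus if the rigging is closed, i.e. $d\omega=0$, then $L_\xi\tilde g=0$ identically, so $\xi$ is a nowhere-vanishing Killing field for the Riemannian metric $\tilde g$ whose orbits are precisely the leaves of $\mathcal F$; by the Definition of isometric flow this proves the claim, with $h=\tilde g$.
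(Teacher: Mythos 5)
Your proposal is correct and takes essentially the same route as the paper: both arguments establish holonomy invariance by identifying $(L_{\xi}\tilde{g})(X,Y)$ with $-2B(X,Y)$ on screen vectors (which vanishes since $L$ is totally geodesic), and both settle the closed case by showing $\xi$ is a nonsingular $\tilde{g}$-Killing field, the only difference being that the paper cites \cite[Propositions 3.7 and 3.14]{GB} for these two facts while you derive them inline via the Leibniz expansion of $L_{\xi}(\omega\otimes\omega)$ and the component-by-component computation of $L_{\xi}\tilde{g}$. One small phrasing slip that does not affect validity: in your computation of $g(\nabla_{\xi}\xi,X)$, the term $g(\xi,[\xi,X])$ vanishes because $\xi$ lies in the radical $TL^{\perp}$ and $[\xi,X]\in TL$, not because $[\xi,X]$ itself lies in the radical.
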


\begin{proof}
Let $F$ be the $1$-dimensional null foliation defined by the rigged vector field $\xi$. We can identify
$Q=TL/TF$ with the screen bundle $\mathcal{S}^{\zeta}$. From \cite[Proposition
3.7]{GB}, for all $X,Y\in$ $\mathcal{S}^{\zeta}$, $(L_{\xi}\tilde
{g})(X,Y)=-2B(X,Y).$ But since $L$ is totally geodesic, $B$ vanishes
identically so that $(L_{\xi}\tilde{g})(X,Y)=0,\forall\;X,Y\in\mathcal{S}%
^{\zeta}$, which prove that $F$ is a Riemannian flow.

If we suppose that $\zeta$ is closed, then $\xi$ is a Killing vector field
(see \cite[Proposition 3.14]{GB}). Hence the flow is isometric.
\end{proof}

\begin{definition}
\cite{Tond}
\textbf{Transverse connection} and \textbf{transverse curvatures}.

Let $(M,g)$ be a Riemannian manifold and $F$ a codimension $q$ Riemannian
foliation with bundle-like metric $g$. Let $\pi:TM\rightarrow Q$ be the
canonical projection where $Q=TM/TF=TF^{\perp}.$ The basic Levi-Civita
connection or transverse connection) on $Q$ is defined by:
\[
\nabla_{X}^{T}\nu=\left\{
\begin{array}
[c]{ccc}%
\pi([X,Z_{\nu}]) & if & X\in\Gamma(TF)\\
\pi(\nabla_{X}Z_{\nu}) & if & X\in\Gamma(TF^{\perp})
\end{array}
\right.
\]
for any section $\nu\in\Gamma(Q)$, where $Z_{\nu}\in\Gamma(TM)$ is a lift of
$\nu$, i.e. $\pi(Z_{\nu})=\nu$. The basic Levi-Civita connection $\nabla^{T}$
is torsion free and metric-compatible.

If $\mu,\nu$ in $\Gamma(Q)$ are unitary and orthogonal, the tranverse
sectional curvature of the plane spanned by $\mu$ and $\nu$ is defined by
\[
K^{T}(\mu,\nu)=g_{Q}(R^{T}(\mu,\nu)\nu,\mu),
\]
where
\[
R^{T}(\mu,\nu)\nu=\nabla_{\mu}^{T}\nabla_{\nu}^{T}\nu-\nabla_{\nu}^{T}%
\nabla_{\mu}^{T}\nu-\nabla_{\lbrack\mu,\nu]}^{T}\nu.
\]
The tranverse Ricci tensor is defined by
\[
Ric^{T}(X,Y)=\sum_{i=1}^{q}g_{Q}(R^{T}(X,e_{i})e_{i},Y)
\]
for all $X,Y\in\Gamma(Q)$, where $\{e_{i}\}$ is a local orthonormal frame of
$Q$. The transverse Ricci operator is given by
\[
\rho^{T}(X)=\sum_{i=1}^{q}R^{T}(X,e_{i})e_{i}.
\]

The transverse scalar curvature is given by
\[
S^{T}=\sum_{i=1}^{q}g_{Q}(\rho^{T}(e_{i}),e_{i}).
\]
The Riemannian foliation $F$ has \textbf{constant transverse curvature} $k$ if
$K^{T}(P_{x})$ is a constant $k$ for any plane $P_{x}\subset Q_{x}$ and any
$x\in M$. If $k>0$ (respectively $k<0$ ; $k=0$), we say that the Riemannian
foliation is transversally elliptic (respectively hyperbolic, euclidian).
\end{definition}

The following identities are well-known.%
\begin{align}
\nabla_{U}V  & =\nabla_{U}^{L}V+B(U,V)N,\label{e1}\\
\nabla_{U}^{L}X  & =\nabla_{U}^{\ast}X+C(U,X)\xi \label{e2}
\end{align}
where $U,V,\nabla_{U}^{L}V\in\mathfrak{X}(L)$, $X,\nabla_{U}^{\ast}%
X\in\mathcal{S}^{\zeta}$. So they define two connections, $\nabla^{L}$ on $L$
and $\nabla^{\ast}$ on $\mathcal{S}^{\zeta}$.

There are another two identities given by

\begin{align}
\nabla_UN & = \tau (U)N-A(U),\\
\nabla_U\xi & = -\tau(U)\xi-A^*(U),
\end{align}
which defines $A$, the shape operator of $L$ which is a $(1,1)$-tensor in $L$, $A^*$ the shape operator of $\mathcal{S}^{\zeta}$ which is a $(1,1)$-tensor in $L$ taking values in $\mathcal{S}^{\zeta}$, and the rotation $1$-form $\tau$ on $L$ which satisfies

\[
\tau(U)=g(\nabla_UN,\xi)=g(\nabla_U\zeta,\xi).
\]

We prove the following.

\begin{proposition}
\label{equal connection} Let $(M,g)$ be a Lorentzian manifold and $L$ a totally geodesic null hypersurface with rigging $\zeta$. Let $F$ be the Riemannian flow defined by the rigged vector field $\xi$. The transverse connection
$\nabla^{T}$ induced on $Q=TL/TF$ coincides with the connection $\nabla^{\ast}$.
\end{proposition}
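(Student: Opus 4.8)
The plan is to verify $\nabla^{T}=\nabla^{\ast}$ directly on the two kinds of directions that appear in the definition of the transverse connection, namely $X\in\Gamma(TF)$ (the line field spanned by $\xi$) and $X\in\Gamma(TF^{\perp})$. First I would record the facts that make the identification $Q\cong\mathcal{S}^{\zeta}$ compatible with $\tilde{g}$: with respect to the rigged metric $\xi$ is unit and $\tilde{g}$-orthogonal to $\mathcal{S}^{\zeta}$, so $TF^{\perp}=\mathcal{S}^{\zeta}$ and $\pi:TL\to Q$ is exactly the projection onto the screen along $\xi$. Moreover $\tilde{g}$ and $g$ agree on $\mathcal{S}^{\zeta}$, and since $\xi$ lies in the radical of $g|_{TL}$ one has $\tilde{g}(W,Z)=g(W,Z)$ for every $W\in\mathfrak{X}(L)$ and every screen field $Z$. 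Finally, because $L$ is totally geodesic ($B=0$), identity \eqref{e1} gives $\nabla_{U}V=\nabla^{L}_{U}V$, so by \eqref{e2} the field $\nabla^{\ast}_{U}X$ is precisely the screen component of the ambient $\nabla_{U}X$; this is the expression I would compare against $\nabla^{T}$.

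For $X\in\Gamma(TF^{\perp})=\Gamma(\mathcal{S}^{\zeta})$ one has $\nabla^{T}_{X}\nu=\pi(\tilde{\nabla}_{X}Y)$, where $\tilde{\nabla}$ is the Levi-Civita connection of $\tilde{g}$ and $Y\in\Gamma(\mathcal{S}^{\zeta})$ is the canonical lift of $\nu$. I would compute both $\tilde{g}(\tilde{\nabla}_{X}Y,Z)$ and $g(\nabla_{X}Y,Z)$ for an arbitrary screen field $Z$ through Koszul's formula. The three derivative terms involve only products of screen fields, where $\tilde{g}$ and $g$ coincide, and each of the three bracket terms is a pairing of a vector tangent to $L$ against a screen field, where again $\tilde{g}$ and $g$ agree; hence all six terms match and $\tilde{g}(\tilde{\nabla}_{X}Y,Z)=g(\nabla_{X}Y,Z)$ for all $Z\in\mathcal{S}^{\zeta}$. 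Non-degeneracy of $g$ on the screen then forces the screen components of $\tilde{\nabla}_{X}Y$ and $\nabla_{X}Y$ to coincide, i.e. $\pi(\tilde{\nabla}_{X}Y)=\nabla^{\ast}_{X}Y$.

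For $X=\xi\in\Gamma(TF)$ the definition reads $\nabla^{T}_{\xi}\nu=\pi([\xi,Y])$. Using that the ambient connection is torsion-free, $[\xi,Y]=\nabla_{\xi}Y-\nabla_{Y}\xi$, and projecting to the screen, the first term contributes $\nabla^{\ast}_{\xi}Y$ while the identity $\nabla_{Y}\xi=-\tau(Y)\xi-A^{\ast}(Y)$ makes the second contribute $A^{\ast}(Y)$. To close the argument I would show that total geodesy annihilates this correction: from $B(U,V)=g(\nabla_{U}V,\xi)=g(A^{\ast}U,V)$ together with $B\equiv0$, non-degeneracy of $g$ on $\mathcal{S}^{\zeta}$ yields $A^{\ast}\equiv0$, so $\nabla^{T}_{\xi}\nu=\nabla^{\ast}_{\xi}Y$. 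Combining the two cases gives $\nabla^{T}=\nabla^{\ast}$ under the identification $Q\cong\mathcal{S}^{\zeta}$.

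The main obstacle is the $TF^{\perp}$ case, where one must reconcile the Levi-Civita connection $\tilde{\nabla}$ of the rigged metric with the ambient connection even though $\tilde{g}$ and $g$ differ away from the screen; the point to exploit is that this discrepancy only affects the $\xi$-direction, which $\pi$ discards, so the Koszul comparison survives once everything is tested against screen fields. A cleaner but essentially equivalent alternative would be to verify that $\nabla^{\ast}$ is torsion-free, metric-compatible and projectable and then invoke uniqueness of the basic Levi-Civita connection; this still requires the $\xi$-direction computation above, so I would prefer the direct route.
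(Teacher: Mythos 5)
Your proposal is correct. It follows the same two-case structure as the paper's proof (which is forced by the definition of $\nabla^{T}$), and your treatment of the $TF$-direction is essentially identical to the paper's: write $[\xi,Y]=\nabla_{\xi}Y-\nabla_{Y}\xi$ by torsion-freeness, use $\nabla_{\xi}Y=\nabla^{\ast}_{\xi}Y+C(\xi,Y)\xi$ and $\nabla_{Y}\xi=-\tau(Y)\xi-A^{\ast}(Y)$, and kill the correction term via $A^{\ast}\equiv 0$, which follows from $B\equiv 0$ and non-degeneracy of $g$ on the screen. The only genuine divergence is in the $TF^{\perp}$-direction: the paper simply quotes \cite[Proposition 3.7]{GB}, which gives $\widetilde{\nabla}_{X}Y=\nabla^{\ast}_{X}Y-\widetilde{g}(\widetilde{\nabla}_{X}\xi,Y)\xi$ and hence $\pi(\widetilde{\nabla}_{X}Y)=\nabla^{\ast}_{X}Y$ immediately, whereas you re-derive the projected statement from first principles by comparing the Koszul formulas for $\widetilde{\nabla}$ and $\nabla$. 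Your comparison is sound: the three derivative terms involve only screen-screen pairings, the three bracket terms pair vectors tangent to $L$ against screen fields, and on both kinds of pairings $\widetilde{g}$ and $g$ agree (since $\omega$ vanishes on the screen and $\xi$ spans the radical of $g|_{TL}$), so non-degeneracy of $g$ on the spacelike screen closes the argument. What your route buys is self-containedness — no appeal to the rigging formula of \cite{GB} — and it incidentally makes visible that this half of the proposition needs no total geodesy at all, since $g(\xi,Z)=g(N,Z)=0$ already removes the $C(X,Y)\xi+B(X,Y)N$ contributions when testing against screen fields; the paper's route is shorter because that work is hidden in the cited formula. Otherwise the two arguments are interchangeable.
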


\begin{proof}
We can identify $Q=TL/TF$ with the screen bundle $\mathcal{S}^{\zeta}$. From
\cite[Proposition 3.7]{GB}, for all $X,Y\in$ $\mathcal{S}^{\zeta}$,
$\widetilde{\nabla}_{X}Y=\nabla_{X}^{\ast}Y-\widetilde{g}(\widetilde{\nabla
}_{X}\xi,Y)\xi.$ It follows that $\nabla_{X}^{T}Y=\pi(\widetilde{\nabla}%
_{X}Y)=\nabla_{X}^{\ast}Y.$

Now for any $X\in\mathcal{S}^{\zeta}$,
\begin{align}
\nabla_{\xi}^{T}X  &  =\pi([\xi,X])\nonumber\\
&  =\pi(\nabla_{\xi}X-\nabla_{X}\xi)\label{1}\\
&  =\nabla_{\xi}^{\ast}X.\nonumber
\end{align}
where we can use $[\xi,X]=\nabla_{\xi}X-\nabla_{X}\xi$ because both
$\nabla_{\xi}X,\nabla_{X}\xi\in\mathfrak{X}(L)$. The last equality comes from
the fact that $\nabla_{\xi}X=\nabla_{\xi}^{\ast}X+C(\xi,X)\xi$ and $\nabla
_{X}\xi=-A^{\ast}(X)-\tau(X)\xi=-\tau(X)\xi$, where we use $B=0$, see \cite{GB}.
\end{proof}

As a consequence, we get the following.

\begin{proposition}
\label{curvat} Let $(M,g)$ be a Lorentzian manifold and
$L$ a totally geodesic null hypersurface with rigging $\zeta$. Let $F$ be the Riemannian flow defined by the rigged vector field $\xi$. Let $\nabla^{T}$ be the transverse
connection induced on $Q=\mathcal{S}^{\zeta}$. For any $X,Y\in\mathcal{S}%
^{\zeta}$ unitary and orthogonal, we have
\[
K^{T}(X,Y)=\widetilde{K}(X,Y)+\frac{3}{4}d\omega(X,Y)^{2}=K(X,Y).
\]

\end{proposition}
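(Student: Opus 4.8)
The plan is to treat the statement as a chain of two equalities and establish each separately: the first, $K^{T}(X,Y)=\widetilde{K}(X,Y)+\tfrac{3}{4}d\omega(X,Y)^{2}$, by O'Neill's curvature formula for the Riemannian flow; and the second, which amounts to $K^{T}=K$, by comparing the transverse connection with the ambient Levi-Civita connection. Throughout I would use the rigging facts from \cite{GB} that $\xi$ is $\widetilde{g}$-unit with $\omega(\xi)=1$ and $\mathcal{S}^{\zeta}=TF^{\perp}$, together with the fact that $\widetilde{g}$ restricts to $g$ on $\mathcal{S}^{\zeta}$.

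For the first equality I would invoke that $\widetilde{g}$ is bundle-like for $F$ (the previous Lemma), so that the transverse geometry is locally that of a Riemannian submersion onto the leaf space, with vertical direction $\xi$ and horizontal distribution $\mathcal{S}^{\zeta}$. O'Neill's formula then reads $K^{T}(X,Y)=\widetilde{K}(X,Y)+\tfrac{3}{4}|[X,Y]^{V}|^{2}$ for $\widetilde{g}$-orthonormal horizontal $X,Y$, where $[X,Y]^{V}$ is the vertical part of the bracket. The only computation is to identify this term: since $X,Y\in\mathcal{S}^{\zeta}=\ker\omega$ one has $d\omega(X,Y)=-\omega([X,Y])$, and because $\xi$ is $\widetilde{g}$-unit, orthogonal to $\mathcal{S}^{\zeta}$, with $\omega(\xi)=1$, the vertical projection gives $[X,Y]^{V}=\widetilde{g}([X,Y],\xi)\,\xi=\omega([X,Y])\,\xi=-d\omega(X,Y)\,\xi$. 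Hence $|[X,Y]^{V}|^{2}=d\omega(X,Y)^{2}$, which yields the first equality.

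For the second equality I would argue $K^{T}=K$ directly. By Proposition \ref{equal connection}, $\nabla^{T}=\nabla^{\ast}$, so $R^{T}=R^{\ast}$ and $K^{T}(X,Y)=\widetilde{g}(R^{\ast}(X,Y)Y,X)=g(R^{\ast}(X,Y)Y,X)$, using $\widetilde{g}=g$ on the screen. I would then expand the ambient curvature $R(X,Y)Y=\nabla_{X}\nabla_{Y}Y-\nabla_{Y}\nabla_{X}Y-\nabla_{[X,Y]}Y$ with $B=0$: by \eqref{e1} the induced connection equals the ambient one on $L$, by \eqref{e2} one has $\nabla_{U}W=\nabla^{\ast}_{U}W+C(U,W)\xi$ for $W\in\mathcal{S}^{\zeta}$, and $\nabla_{U}\xi=-\tau(U)\xi$ since $A^{\ast}=0$ (exactly as in the proof of Proposition \ref{equal connection}). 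Carrying these substitutions through, the $\mathcal{S}^{\zeta}$-component is precisely $R^{\ast}(X,Y)Y$, while every remaining term (those carrying $C$ or $\tau$) lies along $\xi$, and no $N$-component ever appears because all differentiated vectors stay in $TL$ and $B=0$. Pairing with $X\in\mathcal{S}^{\zeta}$, for which $g(X,\xi)=0$, the $\xi$-terms drop out and $g(R(X,Y)Y,X)=g(R^{\ast}(X,Y)Y,X)=K^{T}(X,Y)$. Since $X,Y$ are also $g$-orthonormal, the left-hand side equals $K(X,Y)$, giving $K^{T}=K$ and closing the chain.

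I anticipate two obstacles. First, applying O'Neill's formula requires passing from the global flow to the local Riemannian submersion onto the leaf space and checking that the transverse connection and curvature coincide with those of the submersion; this is standard for bundle-like metrics (see \cite{Tond}) but should be stated with care, along with the identification of the O'Neill integrability tensor with $d\omega$. Second, in the comparison argument the bookkeeping in expanding $R(X,Y)Y$ must confirm that all correction terms genuinely lie along $\xi$; here the totally geodesic hypothesis $B=0$ (hence $A^{\ast}=0$ and $\nabla^{L}=\nabla$) is used essentially and repeatedly, since it is what confines the correction terms to the $\xi$-direction and suppresses any $N$-component, so that they vanish upon contraction with $X$.
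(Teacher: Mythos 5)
Your proposal is correct, but it routes the two equalities through different tools than the paper does, essentially swapping which half is a computation and which is a citation. The paper proves $K^{T}(X,Y)=\widetilde{K}(X,Y)+\frac{3}{4}d\omega(X,Y)^{2}$ by hand: it expands $R^{\ast}(X,Y)Y$ using $\widetilde{\nabla}_{X}Y=\nabla_{X}^{\ast}Y-\widetilde{g}(\widetilde{\nabla}_{X}\xi,Y)\xi$ and $L_{\xi}\widetilde{g}=-2B=0$, and identifies the correction terms via $\widetilde{g}(\widetilde{\nabla}_{X}\xi,Y)=\frac{1}{2}d\omega(X,Y)$ and $\widetilde{g}([X,Y],\xi)=-d\omega(X,Y)$ --- in effect re-deriving O'Neill's formula in this setting --- and then obtains the second equality by simply citing \cite[Theorem 4.2]{GB}. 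You instead quote O'Neill's formula for the local Riemannian submersions determined by the bundle-like metric, reducing the first equality to the identification $[X,Y]^{V}=-d\omega(X,Y)\xi$ (your Cartan-formula computation of this is essentially the identity the paper derives internally), and you prove $K^{T}=K$ by a direct Gauss-type expansion of $R(X,Y)Y$ using $B=0$, hence $\nabla=\nabla^{L}$, $A^{\ast}=0$, $\nabla_{U}\xi=-\tau(U)\xi$ and \eqref{e2}, so that all correction terms lie along $\xi$ and vanish when paired with $X$, since $g(\xi,X)=0$. Both routes are sound. The paper's version is self-contained as far as foliation theory goes: it never needs the (standard but nontrivial) identification of the transverse curvature of a bundle-like metric with the base curvature of local quotient submersions, which your first step must invoke and which you rightly flag as the delicate point. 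Your version is instead self-contained with respect to \cite{GB}: your second step re-proves the totally geodesic case of \cite[Theorem 4.2]{GB}, so combined with O'Neill you recover that identity as a corollary rather than consuming it as input. If you write this up, make explicit the facts you use silently: $\widetilde{g}=g$ on $\mathcal{S}^{\zeta}$ (so $\widetilde{g}$-orthonormality of $X,Y$ is $g$-orthonormality and $K(X,Y)$ is the sectional curvature of a spacelike plane), $g(\xi,U)=0$ for every $U\in TL$ (which kills the $\xi$-terms), and Proposition \ref{equal connection} to convert $K^{T}(X,Y)$ into $\widetilde{g}_{Q}(R^{\ast}(X,Y)Y,X)$.
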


\begin{proof}
Let $X,Y\in\mathcal{S}^{\zeta}$ be unitary and orthogonal. From Proposition
\ref{equal connection}, $K^{T}(X,Y)=\widetilde{g}_{Q}(R^{\ast}(X,Y)Y,X)$,
where
\[
R^{\ast}(X,Y)Y=\nabla_{X}^{\ast}\nabla_{Y}^{\ast}Y-\nabla_{Y}^{\ast}\nabla
_{X}^{\ast}Y-\nabla_{\lbrack X,Y]}^{\ast}Y.
\]

From \cite[Proposition 3.7]{GB},
\[
\widetilde{\nabla}_{X}Y=\nabla_{X}^{\ast}Y-\widetilde{g}(\widetilde{\nabla
}_{X}\xi,Y)\xi\ \ and\ \ (L_{\xi}\tilde{g})(X,Y)=-2B(X,Y).
\]

It follows that $\nabla_{X}^{\ast}\nabla_{Y}^{\ast}Y=\widetilde{\nabla}%
_{X}\nabla_{Y}^{\ast}Y+\widetilde{g}(\widetilde{\nabla}_{X}\xi,\nabla
_{Y}^{\ast}Y)\xi$. But $\nabla_{Y}^{\ast}Y=\widetilde{\nabla}_{Y}%
Y+\widetilde{g}(\widetilde{\nabla}_{Y}\xi,Y)\xi.$ Since $(L_{\xi}\tilde
{g})(Y,Y)=-2B(Y,Y)=0$ then $\widetilde{g}(\widetilde{\nabla}_{Y}\xi,Y)=0$. So,
$\nabla_{Y}^{\ast}Y=\widetilde{\nabla}_{Y}Y$ and we get
\begin{equation}
\widetilde{g}_{Q}(\nabla_{X}^{\ast}\nabla_{Y}^{\ast}Y,X)=\widetilde
{g}(\widetilde{\nabla}_{X}\widetilde{\nabla}_{Y}Y,X). \label{equ 1}%
\end{equation}

In the same way, we have $\nabla_{Y}^{\ast}\nabla_{X}^{\ast}Y=\widetilde
{\nabla}_{Y}\nabla_{X}^{\ast}Y+\widetilde{g}(\widetilde{\nabla}_{Y}\xi
,\nabla_{X}^{\ast}Y)\xi$. Using the fact that $\nabla_{X}^{\ast}%
Y=\widetilde{\nabla}_{X}Y+\widetilde{g}(\widetilde{\nabla}_{X}\xi,Y)\xi$, we
get
\[
\nabla_{Y}^{\ast}\nabla_{X}^{\ast}Y=\widetilde{\nabla}_{Y}\widetilde{\nabla
}_{X}Y+Y\left(  \widetilde{g}(\widetilde{\nabla}_{X}\xi,Y)\right)
\xi+\widetilde{g}(\widetilde{\nabla}_{X}\xi,Y)\widetilde{\nabla}_{Y}%
\xi+\widetilde{g}(\widetilde{\nabla}_{Y}\xi,\widetilde{\nabla}_{X}Y)\xi.
\]

It follows that
\begin{equation}
\widetilde{g}_{Q}(\nabla_{Y}^{\ast}\nabla_{X}^{\ast}Y,X)=\widetilde
{g}(\widetilde{\nabla}_{Y}\widetilde{\nabla}_{X}Y,X)+\widetilde{g}%
(\widetilde{\nabla}_{X}\xi,Y)\widetilde{g}(\widetilde{\nabla}_{Y}\xi,X).
\label{equ 2}%
\end{equation}

Now using the decomposition $[X,Y]=[X,Y]^{\zeta}+\widetilde{g}([X,Y],\xi)\xi$
where $[X,Y]^{\zeta}$ is the projection of $[X,Y]$ on $\mathcal{S}^{\zeta}$,
we have
\begin{align}
\nabla_{\lbrack X,Y]}^{\ast}Y & =\nabla_{\lbrack X,Y]^{\zeta}}^{\ast}%
Y+\nabla_{\widetilde{g}([X,Y],\xi)\xi}^{\ast}Y \\ & =\widetilde{\nabla
}_{[X,Y]^{\zeta}}Y+\widetilde{g}(\widetilde{\nabla}_{[X,Y]^{\zeta}}\xi
,Y)\xi+\widetilde{g}([X,Y],\xi)\nabla_{\xi}^{\ast}Y.
\end{align}

Using that $L$ is totally geodesic and (\ref{e2}) it follows that
\[
\nabla_{\lbrack X,Y]}^{\ast}Y=\widetilde{\nabla}_{[X,Y]^{\zeta}}%
Y+\widetilde{g}(\widetilde{\nabla}_{[X,Y]^{\zeta}}\xi,Y)\xi+\widetilde
{g}([X,Y],\xi)[\xi,Y]^{\zeta}.
\]

Hence
\[
\widetilde{g}_{Q}(\nabla_{\lbrack X,Y]}^{\ast}Y,X)=\widetilde{g}%
(\widetilde{\nabla}_{[X,Y]^{\zeta}}Y,X)+\widetilde{g}([X,Y],\xi)\widetilde
{g}([\xi,Y],X).
\]

From this, we get
\begin{align*}
\widetilde{g}_{Q}(\nabla_{\lbrack X,Y]}^{\ast}Y,X) & =\widetilde{g}%
(\widetilde{\nabla}_{[X,Y]^{\zeta}}Y,X)+\widetilde{g}([X,Y],\xi)\widetilde
{g}(\widetilde{\nabla}_{\xi}Y,X) \\ & -\widetilde{g}([X,Y],\xi)\widetilde
{g}(\widetilde{\nabla}_{Y}\xi,X).
\end{align*}

Then%
\begin{equation}
\widetilde{g}_{Q}(\nabla_{\lbrack X,Y]}^{\ast}Y,X)=\widetilde{g}%
(\widetilde{\nabla}_{[X,Y]}Y,X)-\widetilde{g}([X,Y],\xi)\widetilde
{g}(\widetilde{\nabla}_{Y}\xi,X). \label{equ 3}%
\end{equation}

From (\ref{equ 1}), (\ref{equ 2}) and (\ref{equ 3}), we get
\[
K^{T}(X,Y)=\widetilde{K}(X,Y)-\widetilde{g}(\widetilde{\nabla}_{X}%
\xi,Y)\widetilde{g}(\widetilde{\nabla}_{Y}\xi,X)+\widetilde{g}([X,Y],\xi
)\widetilde{g}(\widetilde{\nabla}_{Y}\xi,X).
\]

Since $(L_{\xi}\tilde{g})(X,Y)=-2B(X,Y)=0,$ we have $\widetilde{g}%
(\widetilde{\nabla}_{X}\xi,Y)=-\widetilde{g}(\widetilde{\nabla}_{Y}\xi,X)$.
Let $\omega$ be the rigged 1-form, that is, the 1-form $\widetilde{g}$-equivalent to $\xi$.  We have $d\omega(X,Y)=\widetilde{g}(\widetilde{\nabla}_{X}\xi,Y)-\widetilde
{g}(\widetilde{\nabla}_{Y}\xi,X)$, so
\[
\widetilde{g}(\widetilde{\nabla}_{X}\xi,Y)=-\widetilde{g}(\widetilde{\nabla
}_{Y}\xi,X)=\frac{1}{2}d\omega(X,Y).
\]

Elsewhere,
\[
\widetilde{g}([X,Y],\xi)=\widetilde{g}(\widetilde{\nabla}_{X}Y,\xi
)-\widetilde{g}(\widetilde{\nabla}_{Y}X,\xi)=-\widetilde{g}(\widetilde{\nabla
}_{X}\xi,Y)+\widetilde{g}(\widetilde{\nabla}_{Y}\xi,X).
\]

Hence
\[
\widetilde{g}([X,Y],\xi)=-2\widetilde{g}(\widetilde{\nabla}_{X}\xi
,Y)=-d\omega(X,Y).
\]

Finally we get
\[
K^{T}(X,Y)=\widetilde{K}(X,Y)+\frac{3}{4}d\omega(X,Y)^{2}.
\]

But from \cite[Theorem 4.2]{GB}, $\widetilde{K}(X,Y)+\frac{3}{4}%
d\omega(X,Y)^{2}=K(X,Y).$ In conclusion,
\[
K^{T}(X,Y)=\widetilde{K}(X,Y)+\frac{3}{4}d\omega(X,Y)^{2}=K(X,Y).
\]

\end{proof}

\begin{remark}
\label{rmq closed} 1. If $(M,g)$ has positive (resp.
negative) curvature on spacelike planes, then the flow has positive (resp.
negative) transverse curvature.\newline2. If the rigging $\zeta$ is closed,
then $d\omega=0$ and we get
\[
K^{T}(X,Y)=\widetilde{K}(X,Y)=K(X,Y).
\]

\end{remark}

\begin{corollary}
\label{transv constant} Let $(M,g)$ be a Lorentzian manifold of constant sectional curvature $c$ and $L$ a totally geodesic null hypersurface with rigging $\zeta$. Then the Riemannian flow $F$ defined by the rigged vector field $\xi$ has constant transverse curvature $c$. In particular, the flow is transversally elliptic (resp. transversally hyperbolic, transversally
euclidian) if $c>0$ (respectively $c<0$ , $c=0$).
\end{corollary}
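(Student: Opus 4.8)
The plan is to deduce the result directly from Proposition \ref{curvat}, which already expresses the transverse sectional curvature in terms of the ambient one, so there is little left to do beyond identifying the relevant planes. First I would fix a point $x\in L$ and an arbitrary plane $P_{x}\subset Q_{x}$. Since the rigged metric $\widetilde{g}$ restricts to a positive definite metric $\widetilde{g}_{Q}$ on the screen bundle $Q=\mathcal{S}^{\zeta}$, I can choose an orthonormal basis $\{X,Y\}$ of $P_{x}$, so that $X,Y$ are unitary and orthogonal vectors in $\mathcal{S}^{\zeta}$. This is exactly the configuration to which Proposition \ref{curvat} applies.

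Next I would invoke Proposition \ref{curvat} to write $K^{T}(X,Y)=K(X,Y)$, where $K(X,Y)$ denotes the sectional curvature of $(M,g)$ on the plane spanned by $X$ and $Y$. The decisive observation is that this plane is spacelike: both $X$ and $Y$ lie in the screen distribution, on which $\widetilde{g}$ (and hence $g$) is positive definite, so the plane they span is non-degenerate. Because $(M,g)$ has constant sectional curvature $c$, every non-degenerate tangent plane has sectional curvature $c$; in particular $K(X,Y)=c$.

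Combining these, $K^{T}(P_{x})=K^{T}(X,Y)=c$, and since $x$ and $P_{x}$ were arbitrary, the transverse sectional curvature equals the constant $c$ at every point and on every transverse plane. By the definition of constant transverse curvature given above, the flow $F$ therefore has constant transverse curvature $c$. The final assertion follows immediately from the definition of transversally elliptic, hyperbolic and euclidean foliations, according to whether $c>0$, $c<0$ or $c=0$.

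I expect essentially no genuine obstacle here, the corollary being a direct specialization of Proposition \ref{curvat}. The only point requiring a moment of care is confirming that the screen planes on which $K^{T}$ is evaluated are non-degenerate, so that the hypothesis of constant sectional curvature of $(M,g)$ actually applies to them; this is guaranteed by the positive-definiteness of the rigged metric $\widetilde{g}$ on $\mathcal{S}^{\zeta}$.
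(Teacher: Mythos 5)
Your proposal is correct and follows exactly the route the paper intends: the corollary is stated as an immediate consequence of Proposition \ref{curvat}, with the transverse curvature of any screen plane identified with the ambient sectional curvature of that (spacelike, hence non-degenerate) plane, which equals $c$ by hypothesis. Your added care in checking that the screen planes are non-degenerate, so that the constant-curvature hypothesis applies, is a worthwhile detail but does not diverge from the paper's argument.
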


Flows with constant transverse curvature on a compact Riemannian manifold $M$
have been classified, see \cite[Theorem 4.1, Theorem 4.2, Theorem 4.3]{Mol}.

In the hyperbolic case, there are two possibilities:\newline1) All the orbits
of $F$ are closed and constitute a Seifert fibration.\newline2) None of the orbits
of $F$ are closed. Then $dim (M) = 3 $ or $4$ and the flow is conjugated to a
particular flow, \cite[Theorem 4.1]{Mol}.

In the euclidean case, there are two possibilities:\newline1) All the orbits
of $F$ are closed and constitute a Seifert fibration.\newline2) The manifold
$M$ is diffeomorphic to $\mathbb{T}^{k}\times P$, where $\mathbb{T}^{k}$ is
the torus of dimension $k>1$ and $P$ is a flat manifold. The flow $F$ is
conjugated to a flow which,  when restricted to the first factor, is a fixed
linear flow, \cite[Theorem 4.2]{Mol}.

In the elliptic case, there are two possibilities:\newline1) All the orbits of $F$ are closed and constitute a Seifert fibration.\newline2) The flow $F$ is obtained by suspension of an isometry of a compact Riemannian manifold with
curvature $1$ (in other words, a manifold covered by the sphere), \cite[Theorem 4.3]{Mol}.

In the elliptic case, if the dimension of $M$ is odd, then the Riemannian flow is a homogeneous $\mathbb{S}^{2q}$-foliation, where  $dim(M)=2q+1$, then the flow has a compact leaf, \cite[Proposition 5.1]{Blu}.

Imposing the causality condition we avoid the existence of closed orbit in the above classification for the Riemannian flow in Corollary \ref{transv constant}, so we can state
the following result.

\begin{theorem}
Let $(M,g)$ be a causal $n$-dimensional Lorentzian
manifold of constant sectional curvature $c$.

\begin{itemize}
\item[1.] If $c>0$ and $n$ is even then there is no compact totally geodesic
null hypersurface.

\item[2.] If $c=0$, then any compact totally geodesic null hypersurface is diffeomorphic to $\mathbb{T}^{k}\times P$, where
$\mathbb{T}^{k}$ is the torus of dimension $k>1$ and where $P$ is a flat manifold.

\item[3.] If $c<0$ and $n\geq6$, then there is no compact totally geodesic
null hypersurface.

\end{itemize}
\end{theorem}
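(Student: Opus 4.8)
The plan is to argue by contradiction in parts 1 and 3, and directly in part 2. Suppose $L$ is a compact totally geodesic null hypersurface of $M$, fix a rigging $\zeta$, and consider the null foliation $F$ generated by the rigged vector field $\xi$. By the Lemma, $F$ is a Riemannian flow on the compact Riemannian manifold $(L,\tilde g)$ for which the rigged metric $\tilde g$ is bundle-like, and by Corollary~\ref{transv constant} it has constant transverse curvature $c$; thus $F$ is transversally elliptic, euclidean or hyperbolic according as $c>0$, $c=0$ or $c<0$. The decisive bookkeeping is that $\dim L = n-1$, and this is the dimension of the manifold carrying the flow in the classification theorems of \cite{Mol, Blu} invoked above.

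The crucial step I would isolate is the statement, announced before the theorem, that \emph{the causality condition forbids closed orbits}. The leaves of $F$ are the integral curves of $\xi$, and since $\xi$ is lightlike each such curve is a null, hence causal, curve of $(M,g)$. A compact leaf of $F$ is a periodic integral curve of $\xi$, i.e. a closed null curve, which is a closed causal curve. Because $(M,g)$ is causal, no closed causal curve exists, so $F$ has no closed orbit and no compact leaf. (Since $L$ is totally geodesic these curves are in fact closed null geodesics, matching the abstract's emphasis, but the weaker fact that they are causal already yields the contradiction.) This is exactly what is needed to discard alternative (1) in each of the Molino--Blumenthal dichotomies, in all of which (1) is precisely the case in which all orbits are closed.

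Running through the three cases then gives the result. \textbf{Part 1} ($c>0$, $n$ even): here $\dim L=n-1$ is odd, so by \cite[Proposition 5.1]{Blu} the transversally elliptic flow $F$ has a compact leaf, contradicting the no-closed-orbit fact; hence no such $L$ exists. \textbf{Part 2} ($c=0$): $F$ is transversally euclidean, alternative (1) of \cite[Theorem 4.2]{Mol} is excluded, so alternative (2) holds and $L\cong\mathbb{T}^{k}\times P$ with $k>1$ and $P$ flat, which is the assertion. \textbf{Part 3} ($c<0$, $n\ge 6$): $F$ is transversally hyperbolic with $\dim L=n-1\ge 5$; alternative (1) of \cite[Theorem 4.1]{Mol} is excluded by causality, while alternative (2) forces $\dim L\in\{3,4\}$, impossible for $n\ge 6$, so no such $L$ exists.

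The main obstacle I anticipate is the bridge step rather than the case analysis: one must justify cleanly that a compact leaf of $F$ is genuinely a closed causal curve of $M$ (so that the causality hypothesis actually bites), and confirm that the ``dimension of $M$'' appearing in the cited classification statements is the dimension $n-1$ of the leaf carrier $L$ and not of the spacetime. Once the identification of orbits with null curves of $M$ and the bookkeeping $\dim L=n-1$ are pinned down, each of the three conclusions follows by reading off the surviving alternative in the classification of Riemannian flows of constant transverse curvature.
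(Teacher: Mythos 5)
Your proposal is correct and is essentially the paper's own argument: the paper proves this theorem exactly by combining the Lemma and Corollary~\ref{transv constant} with the Molino--Blumenthal classification of flows of constant transverse curvature on the compact $(n-1)$-dimensional manifold $L$, using causality of $(M,g)$ to exclude the closed-orbit alternatives (a closed orbit of the null flow being a closed causal curve). Your two flagged ``obstacles'' --- identifying compact leaves with closed causal curves and applying the classification to $L$ rather than $M$ --- are resolved exactly as you resolve them, so there is no gap.
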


\begin{remark}
 The above theorem holds if instead of the constant curvature condition on
the ambient space, we can find a rigging $\zeta$ for which $K(X,Y)$
is constant for all $X,Y\in\mathcal{S}^{\zeta}$.
\end{remark}
It is known that the fundamental group of a 3-dimensional compact manifold admitting a codimension 2 transversally euclidean flow is solvable, see \cite[Corollary 5.6]{Blu}. Moreover, a $n$-dimensional compact manifold admitting a codimension 2 transversally euclidean flow has non trivial first homology group over integers, i.e, $H_1(M, \mathbb{Z}) \neq 0$, see \cite[Corollary 5.7]{Blu}. So, we have the following.

\begin{proposition}
 Let $(M,g)$ be a 4-dimensional flat spacetime. Then any compact totally geodesic null hypersurface $L$ in $M$ has solvable fundamental group and non trivial first homology group over integers, i.e, $H_1(L, \mathbb{Z}) \neq 0.$
\end{proposition}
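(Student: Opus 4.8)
The plan is to read off everything from the Riemannian flow structure already established and then invoke the two cited results of Blumenthal. First I would record the dimensional bookkeeping: since $M$ is $4$-dimensional and $L$ is a null hypersurface (codimension one), $L$ is a $3$-dimensional manifold, and it is assumed compact. The rigged vector field $\xi$ defines a one-dimensional foliation $F$, so the transverse bundle $Q=\mathcal{S}^{\zeta}$ has rank $\dim L-1=2$; hence $F$ is a \emph{codimension two} flow on the compact $3$-manifold $L$.

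The key geometric input is Corollary \ref{transv constant}. Because $(M,g)$ is flat, it has constant sectional curvature $c=0$, so that corollary guarantees that the Riemannian flow $F$ defined by $\xi$ on $L$ has constant transverse curvature $0$, i.e.\ $F$ is transversally euclidean. Thus $L$ is a compact manifold carrying a codimension two transversally euclidean flow, which is exactly the hypothesis required by the two corollaries of \cite{Blu} recalled just before the statement.

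It then remains only to apply those corollaries. Since $\dim L=3$, \cite[Corollary 5.6]{Blu} applies and yields that $\pi_{1}(L)$ is solvable; and since $L$ is an $n$-dimensional ($n=3$) compact manifold with a codimension two transversally euclidean flow, \cite[Corollary 5.7]{Blu} gives $H_{1}(L,\mathbb{Z})\neq 0$. This completes the argument. The proof is essentially a direct specialization, so there is no real technical obstacle; the only point demanding care is the verification that the hypotheses of the cited corollaries are genuinely met, namely that $L$ is compact and three-dimensional and that the flow $F$ is transversally euclidean of codimension two — all of which follow at once from the dimension count and from Corollary \ref{transv constant} in the flat case.
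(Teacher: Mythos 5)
Your proof is correct and follows exactly the route the paper intends: the paper states this proposition immediately after recalling Blumenthal's Corollaries 5.6 and 5.7, with the implicit argument being precisely your specialization (flatness plus Corollary \ref{transv constant} makes the rigged flow on the compact $3$-manifold $L$ a codimension-two transversally euclidean flow). The only step left tacit in both your write-up and the paper is that a rigging exists at all, which is automatic here since a spacetime is time-orientable and any global timelike vector field is transverse to $L$.
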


\section{Geodesic properties of the rigged metric}
A clear advantage of the rigging technique is the use of the induced Riemannian structure on it. In particular, it would be useful to know when both structures, the rigged metric, and the ambient metric share their geodesics. It can be expected that this happens in some cases despite the fact they have different signatures. This is a little bit more involved than totally geodesic hypersurfaces in Riemannian manifolds, but the results in this directions are enough for our purposes.
In this section we explore this relation on both families of geodesics.

Let $P:span(\xi)\oplus\mathcal{S}^{\zeta}\rightarrow\mathcal{S}^{\zeta}$ be
the canonical projection. Let $\overline{C}\in T_{0}^{2}(M)$ defined by%
\[
\overline{C}(U,V)=C(P(U),P(V))-\omega(U)\tau(P(V))-\omega(V)\tau
(P(U))-\omega(U)\omega(V)\tau(\xi).
\]

Multipliying Equation (2.3) of \cite[p. 1221.]{GB} by $\zeta$ we get
\begin{equation}
C(U,X)=-g(\nabla_{U}\zeta,X)-\frac{1}{2}g(\zeta,\zeta)B(U,X), \label{1}%
\end{equation}
$U\in\mathfrak{X}(L)$ and $X\in\mathcal{S}^{\zeta}$.

If $\zeta$ is a closed rigging for $L$, then for any $U,V\in\mathfrak{X}(L)$,
\cite[Prop. 7, p. 10.]{GutOle21}
\begin{equation}
\widetilde{\nabla}_{U}V=\nabla_{U}^{L}V+\left(  B(U,V)-\overline
{C}(U,V)\right)  \xi. \label{2}
\end{equation}

In particular if $\zeta$ is a closed rigging for a totally geodesic null hypersurface, $\nabla_{U}V=\widetilde{\nabla
}_{U}V$ for any $U,V\in\mathfrak{X}(L)$ if and only if $\overline{C}=0$.
Taking into account the definition of $\overline{C}$ and Equation (\ref{1}), this is equivalent to $\tau=0$ and $\nabla_{X}Y\in \mathcal{S}^{\zeta}$ for any $X,Y\in \mathcal{S}^{\zeta}$.

\begin{proposition}
Let $(M,g)$ be a Lorentzian manifold and $L$ a totally geodesic null hypersurface admitting a closed rigging $\zeta$. A curve $\gamma:I\rightarrow L$ is
both a $g$-geodesic and a $\widetilde{g}$-geodesic if and only if
$\overline{C}(\gamma^{\prime
},\gamma^{\prime})=0$.
\end{proposition}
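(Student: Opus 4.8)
The plan is to use the totally geodesic hypothesis to collapse both geodesic equations onto a single identity and then read the statement off its $\xi$-component. Since $L$ is totally geodesic, $B\equiv 0$, so the Gauss-type formula \eqref{e1} applied to $U=V=\gamma'$ gives $\nabla_{\gamma'}\gamma'=\nabla^{L}_{\gamma'}\gamma'$; thus the ambient acceleration of a curve contained in $L$ is tangent to $L$, and $\gamma$ is a $g$-geodesic exactly when $\nabla^{L}_{\gamma'}\gamma'=0$. Because $\zeta$ is closed, formula \eqref{2} applies, and with $B\equiv 0$ it becomes $\widetilde{\nabla}_{U}V=\nabla^{L}_{U}V-\overline{C}(U,V)\xi$. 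Setting $U=V=\gamma'$ and inserting the previous relation yields
\[
\widetilde{\nabla}_{\gamma'}\gamma'=\nabla_{\gamma'}\gamma'-\overline{C}(\gamma',\gamma')\,\xi,
\]
which I regard as the heart of the argument: the two accelerations differ only by a multiple of $\xi$, the coefficient being $\overline{C}(\gamma',\gamma')$, while their screen components agree.

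For the direct implication I would assume $\gamma$ is at once a $g$-geodesic and a $\widetilde{g}$-geodesic, so that $\nabla_{\gamma'}\gamma'=0$ and $\widetilde{\nabla}_{\gamma'}\gamma'=0$. The displayed identity then reduces to $\overline{C}(\gamma',\gamma')\,\xi=0$, and since the rigged vector field $\xi$ is nowhere vanishing this forces $\overline{C}(\gamma',\gamma')=0$. This half needs nothing beyond the identity.

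The converse is where the real care is needed, and I expect it to be the main obstacle. Assuming $\overline{C}(\gamma',\gamma')=0$, the identity collapses to $\widetilde{\nabla}_{\gamma'}\gamma'=\nabla_{\gamma'}\gamma'$, i.e.\ the $g$- and $\widetilde{g}$-accelerations of $\gamma$ coincide; consequently $\gamma$ solves the $g$-geodesic equation if and only if it solves the $\widetilde{g}$-geodesic equation. To upgrade this equivalence to the assertion that $\gamma$ is simultaneously a geodesic for both metrics one must know that $\gamma$ is a geodesic for one of them --- equivalently, that the common acceleration, whose screen part is not controlled by $\overline{C}(\gamma',\gamma')$, actually vanishes. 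I would therefore make explicit that in the converse $\gamma$ is taken to be a geodesic of (at least) one of the two structures, so that the vanishing of $\overline{C}(\gamma',\gamma')$ transports the geodesic property to the other; this is the delicate point to state cleanly.
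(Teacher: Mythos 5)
Your proof is correct and is essentially the paper's own argument: the paper gives no separate proof of this proposition, presenting it as an immediate consequence of the identity $\widetilde{\nabla}_{U}V=\nabla^{L}_{U}V+\bigl(B(U,V)-\overline{C}(U,V)\bigr)\xi$ valid for a closed rigging, which with $B=0$ (so that $\nabla^{L}=\nabla$ on $L$) is exactly your key identity $\widetilde{\nabla}_{\gamma'}\gamma'=\nabla_{\gamma'}\gamma'-\overline{C}(\gamma',\gamma')\,\xi$. Your caveat about the converse is also accurate: read literally the backward implication fails (a curve that is a geodesic of neither metric can still satisfy $\overline{C}(\gamma',\gamma')=0$), and the intended reading, consistent with the paper's preceding remark that $\nabla=\widetilde{\nabla}$ on $\mathfrak{X}(L)$ if and only if $\overline{C}=0$, is the one you make explicit --- for a curve that is a geodesic of one of the two metrics, the vanishing of $\overline{C}(\gamma',\gamma')$ is equivalent to its being a geodesic of both.
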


We can weaken the above conditions to prove the following result, which is useful for our purposes.

\begin{proposition}\label{prop3}
Let $(M,g)$ be a Lorentzian manifold and $L$ a totally geodesic null
hypersurface that admits a rigging $\zeta$ such that the screen distribution is integrable and $C(X,X)=0$ for any $X\in\mathcal{S}^{\zeta}$. Let $\gamma:I\rightarrow S$ be any curve with $S$ a leaf of $\mathcal{S}$. Then the following claims are equivalent.
\begin{itemize}
    \item The curve $\gamma$ is a $\widetilde{g}$-geodesic.
    \item The curve $\gamma$ is a $g$-geodesic.
    \item The curve $\gamma$ is a $i^*g$-geodesic, where $i:S\rightarrow M$ is the canonical inclusion.
\end{itemize}
\end{proposition}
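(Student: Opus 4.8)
The plan is to establish the three equivalences by exploiting the structure formulas \eqref{e1} and \eqref{e2} together with the hypotheses. The central observation is that for a curve $\gamma$ lying in a leaf $S$ of the screen distribution $\mathcal{S}$, the velocity $\gamma'$ is a section of $\mathcal{S}^{\zeta}$, so I can apply the decompositions directly with $U=V=\gamma'$. First I would compute the ambient covariant derivative of $\gamma'$ along $\gamma$ using \eqref{e1}, which gives $\nabla_{\gamma'}\gamma' = \nabla_{\gamma'}^{L}\gamma' + B(\gamma',\gamma')N$. Since $L$ is totally geodesic, $B\equiv 0$, so this reduces to $\nabla_{\gamma'}\gamma' = \nabla_{\gamma'}^{L}\gamma'$. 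Then applying \eqref{e2}, $\nabla_{\gamma'}^{L}\gamma' = \nabla_{\gamma'}^{\ast}\gamma' + C(\gamma',\gamma')\xi$, and the hypothesis $C(X,X)=0$ for $X\in\mathcal{S}^{\zeta}$ kills the $\xi$-term, yielding $\nabla_{\gamma'}\gamma' = \nabla_{\gamma'}^{\ast}\gamma'$, a section of $\mathcal{S}^{\zeta}$.

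Next I would relate $\nabla^{\ast}$ to the rigged connection $\widetilde{\nabla}$. From \cite[Proposition 3.7]{GB}, for $X,Y\in\mathcal{S}^{\zeta}$ one has $\widetilde{\nabla}_{X}Y = \nabla_{X}^{\ast}Y - \widetilde{g}(\widetilde{\nabla}_{X}\xi,Y)\xi$. Since $L$ is totally geodesic, $(L_{\xi}\widetilde{g})(X,X)=-2B(X,X)=0$, so evaluating along $\gamma'$ gives $\widetilde{g}(\widetilde{\nabla}_{\gamma'}\xi,\gamma')=0$, and therefore $\widetilde{\nabla}_{\gamma'}\gamma' = \nabla_{\gamma'}^{\ast}\gamma'$. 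Combining with the previous paragraph, I obtain the pointwise identity $\nabla_{\gamma'}\gamma' = \widetilde{\nabla}_{\gamma'}\gamma' = \nabla_{\gamma'}^{\ast}\gamma'$, which immediately shows that $\gamma$ is a $g$-geodesic if and only if it is a $\widetilde{g}$-geodesic, since both acceleration vectors agree. This handles the equivalence of the first two claims.

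For the third claim I would use that the screen distribution is integrable, so the leaf $S$ is a genuine submanifold and the restriction $i^{\ast}g$ is a Riemannian metric on $S$ (it is positive definite since $\mathcal{S}^{\zeta}$ is spacelike). The induced Levi-Civita connection of $i^{\ast}g$ on $S$ is the tangential projection onto $TS=\mathcal{S}^{\zeta}$ of the ambient $\nabla$ restricted to vectors tangent to $S$; that is, $\nabla^{i^{\ast}g}_{\gamma'}\gamma' = P(\nabla_{\gamma'}\gamma')$. But I have just shown $\nabla_{\gamma'}\gamma' = \nabla_{\gamma'}^{\ast}\gamma'$ already lies in $\mathcal{S}^{\zeta}$, so the projection $P$ acts as the identity and $\nabla^{i^{\ast}g}_{\gamma'}\gamma' = \nabla_{\gamma'}\gamma'$. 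Hence $\gamma$ is an $i^{\ast}g$-geodesic precisely when $\nabla_{\gamma'}\gamma'=0$, closing the three-way equivalence.

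The step I expect to require the most care is justifying that the intrinsic Levi-Civita connection of $(S,i^{\ast}g)$ is recovered as the $\mathcal{S}^{\zeta}$-projection of the ambient connection. This is the analogue of the Gauss formula, but here the ambient metric is Lorentzian and $S$ sits inside the degenerate hypersurface $L$ rather than directly as a nondegenerate submanifold of $M$, so I must verify that the tangential part of $\nabla$ restricted to $TS$ genuinely defines a torsion-free metric connection for $i^{\ast}g$. The integrability of $\mathcal{S}$ is essential here: it guarantees that $[\gamma',\cdot]$ stays tangent to $S$ so that the projected connection has vanishing torsion, and the fact that $C(X,X)=0$ ensures no normal ($\xi$-directed) component escapes along the diagonal. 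Once this identification is in place the remaining arguments are purely algebraic substitutions into \eqref{e1} and \eqref{e2}.
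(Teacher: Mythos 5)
Your proof is correct and follows essentially the same route as the paper's: both arguments reduce the statement to showing that, under $B=0$, $C(X,X)=0$ and integrability of the screen, the accelerations $\nabla_{\gamma'}\gamma'$, $\widetilde{\nabla}_{\gamma'}\gamma'$, $\nabla^{\ast}_{\gamma'}\gamma'$ and that of $(S,i^{\ast}g)$ all coincide, using the decompositions (\ref{e1})--(\ref{e2}) together with \cite[Proposition 3.7]{GB} (the paper cites the same identities packaged as \cite[Remark 3.8]{GB} and an adapted \cite[Proposition 3.13]{GB}). The only difference is organizational: you connect the $g$- and $i^{\ast}g$-geodesic conditions via the ambient Gauss formula and tangency of the acceleration, whereas the paper connects the $\widetilde{g}$- and $i^{\ast}g$-conditions by noting the leaves are totally geodesic in $(L,\widetilde{g})$ --- the underlying computation is identical.
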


\begin{proof}
It is easy to see that the two formulas in \cite[Proposition 3.13.]{GB} are
true imposing $\mathcal{S}^{\zeta}$ is integrable, not necessarily the rigging to be
closed. Thus, being $L$ totally geodesic which means $B=0$, we have
that $S$ is a totally geodesic submanifold of $L$, that is $\widetilde{\nabla}_{X}Y=\nabla_{X}^{\ast}Y$ for any $X,Y\in\mathcal{S}^{\zeta}$ being
$\nabla^{\ast}$ the connection on the leaves of $\mathcal{S}^{\zeta}$ induced by the ambient connection $\nabla$. This shows the equivalence of points one and three.

Now, we use \cite[Remark 3.8]{GB} to
write%
\[
\nabla_{\gamma^{\prime}}^{L}\gamma^{\prime}-\widetilde{\nabla}_{\gamma
^{\prime}}\gamma^{\prime}=\left(  C(\gamma^{\prime},\gamma^{\prime
})+\widetilde{g}(\widetilde{\nabla}_{\gamma^{\prime}}\xi,\gamma^{\prime
})\right)  \xi.
\]

The first summand is zero by hypothesis. The second one is zero because%
\[
\widetilde{g}(\widetilde{\nabla}_{X}\xi,Y)=\frac{1}{2}\left(  d\omega
(X,Y)+L_{\xi}\widetilde{g}(X,Y)\right)  =0
\]
for any $X,Y\in\mathcal{S}^{\zeta}$. The first summand in this expression is zero because $\mathcal{S}^{\zeta}$ is integrable. The second one is also zero because $L_{\xi}\widetilde{g}=-2B=0$. Therefore
$\gamma$ is a $\nabla^{L}$-geodesic. But $B=0$ is equivalent to $\nabla^L=\nabla$ so $\gamma$ is a $g$-geodesic.

Suppose now that $\gamma$ is a $g$-geodesic, then it is a $\nabla^{L}$-geodesic
because $B=0$, so using the hypothesis we have $0=\nabla_{\gamma^{\prime}}%
^{L}\gamma^{\prime}=\widetilde{\nabla}_{\gamma^{\prime}}\gamma^{\prime}$.

\end{proof}

\begin{proposition}
	\label{prop1}
	Let $(M,g)$ be a chronological 3-dimensional Lorentzian manifold admitting a complete timelike Killing vector field $\zeta$ such that $K(\Pi)\geq 0$ for all timelike plane $\Pi$ containing $\zeta$. Suppose
	 $(M,g)$ is null and spacelike complete. Then any topologically closed embedded totally geodesic null surface $L$ contained in $M$ is complete with respect to the rigged metric $\widetilde{g}$ constructed from the rigging $\zeta$, and the universal covering of $L$ is isometric to $\mathbb{R}^2$ with the euclidean metric.
\end{proposition}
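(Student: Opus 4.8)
My plan is to reduce the statement to producing a \emph{commuting, complete, parallel} $\widetilde{g}$-orthonormal frame on the surface $L$, since from such a frame both the completeness of $(L,\widetilde g)$ and the identification of its universal cover with euclidean $\mathbb{R}^2$ follow at once. Write $\xi$ for the rigged vector field and $e$ for a local unit section of the $1$-dimensional screen $\mathcal{S}^\zeta$, so that $\{\xi,e\}$ is a $\widetilde g$-orthonormal frame of $L$. The first task is to cash in that $\zeta$ is Killing. Inserting the Killing equation $g(\nabla_U\zeta,V)+g(\nabla_V\zeta,U)=0$ into $\tau(U)=g(\nabla_U\zeta,\xi)$ and, using $B=0$, into $C(U,X)=-g(\nabla_U\zeta,X)$, gives immediately $\tau(\xi)=0$ and $C(X,X)=0$ for every $X\in\mathcal{S}^\zeta$. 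Because the screen is $1$-dimensional it is automatically integrable, so Proposition \ref{prop3} applies: the screen leaves are simultaneously $\widetilde g$-geodesics and spacelike $g$-geodesics of $M$, while $\tau(\xi)=0$ together with $B=0$ makes the $\xi$-orbits $g$-null geodesics. By the assumed null and spacelike completeness of $M$ these two families are complete, and since $L$ is topologically closed and embedded the integral curves stay in $L$, so the flows of $\xi$ and $e$ are complete on $L$ itself.

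Next I would record the structure equations of the frame. Using $B=0$, $A^{\ast}=0$, $\tau(\xi)=0$ and the identities (\ref{e1})--(\ref{e2}), one finds $\widetilde\nabla_e e=\widetilde\nabla_e\xi=0$, $\widetilde\nabla_\xi\xi=-\phi\, e$, $\widetilde\nabla_\xi e=\phi\,\xi$, and $[\xi,e]=\phi\,\xi$, where $\phi:=2\tau(e)$. A direct evaluation of $\widetilde R(\xi,e)e$ from these relations then expresses the Gaussian curvature of the rigged metric as $\widetilde K=-\bigl(e(\phi)+\phi^{2}\bigr)$.

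The heart of the argument is to turn the ambient hypothesis into the one-sided bound $\widetilde K\ge 0$. For this I would set up the Gauss equation of the null surface for the timelike plane $\mathrm{span}(\zeta,e)$, writing $\zeta=N+\tfrac12 g(\zeta,\zeta)\xi$ and expanding $R(\zeta,e,e,\zeta)$. All tangential contributions are controlled by the computation of the previous paragraph; the only dangerous piece is the transverse term $R(N,e,e,N)$. Here I would use that in dimension $3$ the full curvature tensor is determined by the Ricci tensor (the Weyl part vanishes) and that the Killing identity $\nabla^{2}_{X,Y}\zeta=R(X,\zeta)Y$ pins down $\mathrm{Ric}(\zeta,\cdot)$ in terms of $\nabla^{2}\zeta$; combined with $K(\Pi)\ge 0$ for every timelike plane $\Pi\ni\zeta$ this should yield $\widetilde K\ge 0$. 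I expect this to be the main obstacle, because it is the single step where the geometry of the ambient space, rather than the intrinsic flow structure alone, must be used, and where the transverse curvature term has to be tamed.

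Finally I would close the loop by a Jacobi/Riccati argument along the complete screen geodesics. Fix such a geodesic $c(t)$, $t\in\mathbb{R}$, put $f(t)=\phi(c(t))$ and $u(t)=\exp\!\int_0^t f$, so that $u>0$ and $u''+\widetilde K\,u=0$; thus $u$ solves the Jacobi equation measuring the spreading of the transverse $\xi$-lines. Since $\widetilde K\ge 0$, the function $u$ is positive and concave on all of $\mathbb{R}$, hence constant, forcing $\widetilde K\equiv 0$ and $\phi\equiv 0$. With $\phi=0$ the structure equations collapse to $\widetilde\nabla\xi=\widetilde\nabla e=0$ and $[\xi,e]=0$, so $\{\xi,e\}$ is a commuting, complete, parallel orthonormal frame. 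The orbit map $\mathbb{R}^2\to L$, $(t,x)\mapsto \Phi^{\xi}_t\Phi^{e}_x(p_0)$, is then a local isometry from the euclidean plane onto $L$ and, $\mathbb{R}^2$ being complete, a Riemannian covering; this shows simultaneously that $(L,\widetilde g)$ is complete and that its universal cover is $\mathbb{R}^2$ with the euclidean metric.
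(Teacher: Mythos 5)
Your steps 1--3 and 5 are essentially sound: with $B=0$, $A^{*}=0$ and $\zeta$ Killing one does get $\tau(\xi)=0$, $C(X,X)=0$, and (using additionally $C(\xi,e)=\tau(e)$, which follows from equation (\ref{1}) and the Killing equation -- identities (\ref{e1})--(\ref{e2}) alone do not give it) the structure equations $\widetilde\nabla_{e}e=\widetilde\nabla_{e}\xi=0$, $\widetilde\nabla_{\xi}\xi=-\phi e$, $\widetilde\nabla_{\xi}e=\phi\xi$, $[\xi,e]=\phi\xi$ with $\phi=2\tau(e)$, hence $\widetilde K=-(e(\phi)+\phi^{2})$; and once $\phi\equiv0$ is known, your parallel-frame covering argument is a clean finish (modulo passing to the universal cover of $L$ so that the unit screen field $e$ exists globally). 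The genuine gap is your step 4, the bound $\widetilde K\geq0$, which you yourself flag as the main obstacle: it \emph{cannot} be obtained pointwise by any Gauss-equation/Weyl-vanishing/Killing-identity manipulation. Indeed, differentiating $\tau(e)$ along a screen geodesic with the Killing identity gives $e(\phi)+\phi^{2}=-2g(R(\zeta,e)e,\xi)+2\tau(e)^{2}$, i.e. $\widetilde K=2g(R(\zeta,e)e,\xi)-2\tau(e)^{2}$. Writing $\xi=\frac{1}{g(\zeta,\zeta)}\zeta+\frac{1}{\sqrt{-g(\zeta,\zeta)}}\,u$, with $u$ the unit spacelike vector orthogonal to $\zeta$ in $span(\zeta,\xi)$, one finds $\widetilde K=2K\bigl(span(\zeta,e)\bigr)+\frac{2}{\sqrt{-g(\zeta,\zeta)}}g(R(\zeta,e)e,u)-2\tau(e)^{2}$, and in dimension $3$ the middle curvature term equals $\frac{2}{\sqrt{-g(\zeta,\zeta)}}Ric(\zeta,u)$. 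The hypothesis $K(\Pi)\geq0$ controls only the first summand; it says nothing about the mixed Ricci component $Ric(\zeta,u)$, and $\tau(e)^{2}$ is \emph{first-order} data of $\zeta$ which no pointwise curvature assumption can dominate. So the route you sketch (Gauss equation for $span(\zeta,e)$ plus $3$-dimensionality plus the Killing identity) is not just incomplete -- it is the wrong mechanism.

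The missing idea is a \emph{global} convexity argument, and it must be run along the null direction $\xi$, not along $e$. This is what the paper does: for $f=\frac12 g(\zeta,\zeta)$, the Killing--Hessian formula of Romero--S\'anchez gives $\xi(\xi(f))=K(\zeta,\xi)+g(\nabla_{\xi}\zeta,\nabla_{\xi}\zeta)$ along the $\xi$-orbits (which are complete null geodesics staying in $L$, exactly as you argued). The crucial sign is that $g(\nabla_{\xi}\zeta,\xi)=0$ forces $\nabla_{\xi}\zeta\in TL=\xi^{\perp}$, which contains no timelike vectors, so $g(\nabla_{\xi}\zeta,\nabla_{\xi}\zeta)\geq0$; together with $K(\zeta,\xi)\geq0$ (the hypothesis applied to the timelike plane $span(\zeta,\xi)$) this makes $f$ convex, negative, and defined on all of $\mathbb{R}$, hence constant, whence $K(\zeta,\xi)=g(\nabla_{\xi}\zeta,\nabla_{\xi}\zeta)=0$, so $\nabla_{\xi}\zeta$ is proportional to $\xi$ and $\tau\equiv0$, i.e. $\phi\equiv0$. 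Note this sign argument is unavailable along $e$: $\nabla_{e}\zeta$ is only orthogonal to $e$ and may be timelike, which is precisely why your transverse term cannot be tamed. If you replace your step 4 by this convexity argument, then your own structure equations give $\widetilde K=0$ immediately and your step 5 completes the proof; your ending is in fact a nice self-contained alternative to the paper's, which instead derives $d\omega=0$, $C=0$, equality of the connections $\nabla$, $\widetilde\nabla$ on $L$, and then quotes the curvature identities of the rigging formalism to conclude flatness and completeness.
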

\begin{proof}
Suppose $L$ exists and take $\zeta$ as a rigging. Because $\zeta$ Killing, its rigged vector field is $g$-geodesic, that is $\nabla_{\xi}\xi=0$.
From \cite{RomMig}, and since $\zeta$ is Killing, we have
\begin{equation}
(Hess f)(X,Y) = -g(R(X, \zeta)\zeta, Y) + g(\nabla_X\zeta, \nabla_Y\zeta), \forall X, Y \in TM,
\end{equation}
where $f = \frac{1}{2}g(\zeta, \zeta)$. Take $X = Y = \xi$ and using the fact that $\nabla_\xi\xi = 0$ we get
\begin{equation}
\xi(\xi(f)) = -g(R(\xi, \zeta)\zeta, \xi) + g(\nabla_\xi\zeta, \nabla_\xi\zeta).
\end{equation}

So we get
\begin{equation}
\xi(\xi(f)) = K(\zeta, \xi) + g(\nabla_\xi\zeta, \nabla_\xi\zeta).
\end{equation}

Using again $\zeta$ is Killing we have $g(\nabla_\xi\zeta, \xi) = 0$, which implies $\nabla_\xi\zeta$ is not timelike so
$g(\nabla_\xi\zeta, \nabla_\xi\zeta) \geq 0$. As a consequence $\xi(\xi(f))\geq 0$. Now, take any integral curve $\gamma$ of $\xi$ which we know is a geodesic in $M$. As $(M,g)$ is null complete and $L$ is  topologically closed embedded in $M$, $\gamma$ is defined on $\mathbb{R}.$
But $(f\circ \gamma)^{\prime\prime}\geq 0$, so $f\circ \gamma$ is a convex function defined on $\mathbb{R}$ and is bounded above (since $f $ is negative), it follows that $f\circ \gamma$ is constant and $(f\circ \gamma)^{\prime\prime}= 0$ and then
\begin{equation}
 K(\zeta, \xi) + g(\nabla_\xi\zeta, \nabla_\xi\zeta) = 0.
\end{equation}
Since each term is non negative, we get $ K(\zeta, \xi) = g(\nabla_\xi\zeta, \nabla_\xi\zeta) = 0$. Now since $g(\nabla_\xi\zeta, \xi) = 0$, and $g(\nabla_\xi\zeta, \nabla_\xi\zeta) = 0$, $\nabla_\xi\zeta$ is tangent to $L$ and proportional to $\xi$. But $\tau(X) = g(\nabla_X\zeta, \xi)$, $\forall X \in TL$. Since $\zeta$ is Killing, $g(\nabla_X\zeta, \xi) = -g(\nabla_\xi\zeta, X) = 0$ since $\nabla_\xi\zeta$ is  proportional to $\xi$. So $\tau (X) = 0$, $\forall X \in TL$, $\widetilde{\nabla}_\xi \xi = 0$ and $d\omega(\xi, X)= 0$ for $X $ in the screen distribution (see Lemma 3.10 in \cite{GB}). Since $L$ is two-dimensional, this means that $d\omega = 0$. From \cite[corollary 3.6, point 4]{GB}, $C =0$ since $B = 0, d\omega = 0, L_\zeta g =0.$ From \cite[Equation (20)]{GutOle21}, the connections $\nabla^L, \widetilde{\nabla}$ are equal, and since $B=0$, the connections $\nabla, \widetilde{\nabla}$ are equal. As a consequence, any geodesic of $(L, \widetilde{g})$ is a geodesic of $(M,g)$ and since  $(M,g)$ is null and spacelike complete, then  $(L,\widetilde{g})$ is complete.
From \cite[Theorem 4.4]{GB} with $\mathcal{S^\zeta}$ integrable, $\xi$ $\widetilde{g}$-geodesic and $B=0$, the curvature of $L$ is the null sectional curvature of the null plane $\Pi=span(\xi,X)$ with $X\in\mathcal{S^\zeta}$, but this is zero because $L$ is totally geodesic, see \cite[Equation 2.10]{GB}. So  $(L, \widetilde{g})$ is a complete flat surface. It follows that the universal covering of $L$ is $\mathbb{R}^2$.
\end{proof}

\begin{remark}
Suppose in Proposition \ref{prop1} that $(M,g)$ is $n$ dimensional, then the above proof shows that $\tau (X) = 0, \forall X \in TL$ and $d\omega(\xi, X)= 0$ for $X $ in the screen distribution. Hence if  $(M,g)$ is static with static vector field $\zeta$, then $d\omega = 0$ since in this case $d\omega (X, Y) = 0$ for all $X, Y \in \mathcal{S}^{\zeta}$. Then  the connections $\nabla, \widetilde{\nabla}$ are equal on $\mathfrak{X}(L)$. As a consequence, any geodesic of $(L, \widetilde{g})$ is a geodesic of $(M,g)$ and since  $(M,g)$ is null and spacelike complete, then  $(L, \widetilde{g})$ is complete.
\end{remark}

We can state the following.

\begin{proposition}
	\label{prop2}
	Let $(M,g)$ be a $n$-dimensional  static Lorentzian manifold  with static vector field  $\zeta$ such that $K(\Pi)\geq 0$ for all timelike plane $\Pi$ containing $\zeta$. Suppose
	 $(M,g)$ is null and spacelike complete. Then any topologically closed embedded totally geodesic null hypersurface is complete with respect to the rigged metric $\widetilde{g}$ constructed from the rigging $\zeta$.
\end{proposition}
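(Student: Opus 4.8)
The plan is to reproduce the argument of Proposition~\ref{prop1} up to the point where the static hypothesis enters, invoke the remark following that proposition for the one step that genuinely needs hypersurface-orthogonality, and then upgrade the conclusion from a statement about integral curves of $\xi$ to full geodesic completeness of $(L,\widetilde{g})$. First I would take $\zeta$ itself as the rigging. Since a static vector field is Killing, the rigged vector field $\xi$ satisfies $\nabla_\xi\xi=0$, and the Hessian identity of \cite{RomMig} applied to $f=\tfrac12 g(\zeta,\zeta)$ with $X=Y=\xi$ gives $\xi(\xi(f))=K(\zeta,\xi)+g(\nabla_\xi\zeta,\nabla_\xi\zeta)$. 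Both summands are nonnegative: the first by the curvature hypothesis on timelike planes containing $\zeta$, and the second because the Killing condition forces $g(\nabla_\xi\zeta,\xi)=0$, so $\nabla_\xi\zeta$ lies in $\xi^\perp=T L$ and cannot be timelike. Since $L$ is topologically closed and embedded and $(M,g)$ is null complete, each integral curve of $\xi$ is a $g$-geodesic defined on all of $\mathbb{R}$, and $f<0$ there; convexity together with boundedness above forces $f\circ\gamma$ constant, whence $K(\zeta,\xi)=0$ and $\nabla_\xi\zeta$ is null. Being null and orthogonal to $\xi$ it is proportional to $\xi$, which yields $\tau=0$ on $TL$ and therefore $d\omega(\xi,\cdot)=0$ (cf.\ Lemma~3.10 in \cite{GB}).

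The step where staticity is essential, and the only place the argument departs from the two-dimensional case, is the vanishing of $d\omega$ on the whole screen. In dimension three the screen is one-dimensional, so $d\omega|_{\mathcal{S}^\zeta}=0$ holds automatically; here I would instead invoke the remark after Proposition~\ref{prop1}, namely that for a static $\zeta$ one has $d\omega(X,Y)=0$ for all $X,Y\in\mathcal{S}^\zeta$. Combined with $d\omega(\xi,\cdot)=0$ this gives $d\omega=0$ on $L$. Then $B=0$ (total geodesy), $d\omega=0$, and $L_\zeta g=0$ (Killing) let me apply \cite[Corollary 3.6, point 4]{GB} to obtain $C=0$, and \cite[Equation (20)]{GutOle21} then gives $\nabla^L=\widetilde{\nabla}$; since $B=0$ identifies $\nabla^L$ with $\nabla$ on $\mathfrak{X}(L)$, I conclude $\widetilde{\nabla}=\nabla$ on $\mathfrak{X}(L)$.

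It remains to transfer completeness. Every $\widetilde{g}$-geodesic is an affinely parametrized $g$-geodesic tangent to $L$, and by total geodesy together with the closedness of $L$ it stays in $L$: the set of parameters at which it lies in $L$ is open by local total geodesy and closed since $L$ is topologically closed, hence all of $\mathbb{R}$. Because $g$ restricted to $TL$ is positive semidefinite with null direction $\xi$, the constant quantity $g(\gamma',\gamma')$ is either zero, in which case $\gamma$ is a null geodesic, or positive, in which case $\gamma$ is spacelike; in both cases the null- and spacelike-completeness of $(M,g)$ extend it to all of $\mathbb{R}$. Thus every $\widetilde{g}$-geodesic is complete and $(L,\widetilde{g})$ is complete. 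The main obstacle is precisely the screen computation of the second paragraph: without the hypersurface-orthogonality encoded in staticity there is no reason for $d\omega$ to vanish transversally, and the identification $\widetilde{\nabla}=\nabla$, on which the entire completeness transfer rests, would break down.
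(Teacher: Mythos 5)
Your proposal is correct and takes essentially the same route as the paper, which proves this proposition exactly via the remark following Proposition \ref{prop1}: rerun the proof of Proposition \ref{prop1} in dimension $n$ to obtain $\tau=0$ and $d\omega(\xi,\cdot)=0$, use staticity (integrability of $\zeta^{\perp}$) to get $d\omega=0$ on the screen, conclude $C=0$ and $\nabla=\widetilde{\nabla}$ on $\mathfrak{X}(L)$, and transfer null and spacelike completeness of $(M,g)$ to $(L,\widetilde{g})$. Your final paragraph simply makes explicit details (the open-closed argument keeping the $g$-geodesic inside $L$, and the fact that curves tangent to $L$ are null or spacelike) that the paper leaves implicit.
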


\section{Existence of periodic geodesics}

The question of the existence of periodic geodesics (both the initial and final conditions of the geodesic coincide, that is, $\gamma'(0)=\gamma'(1)$) in a compact Lorentzian manifold is a long-time problem in Lorentzian geometry, recently solved in the negative in the paper \cite{ABZ}. In the Lorentzian setting there is the additional question of the causal character of the geodesics.

 On the other hand, Guediri has proved that compact flat spacetimes contain a causal periodic geodesic, see \cite{Gued}. To our
knowledge, the constant negative curvature case is still open. Recall that compact Lorentzian manifolds with constant positive curvature do not exist.

In this section we apply the ideas of the above sections to establish several results on the existence of this kind of geodesics, in particular in constant negative curvature. We show that the existence depends not only of the curvature properties, but on the causality of the ambient space.

\begin{theorem}
\label{closed geod} Let $(M,g)$ be a non totally vicious
compact $n$-dimensional spacetime ($n\geq6)$ with constant negative curvature.
Then it contains infinite periodic null geodesics.
\end{theorem}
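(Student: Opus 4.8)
The plan is to reduce the statement to the production of a single compact totally geodesic null hypersurface $L\subset M$, after which the Riemannian flow machinery of Section 2 yields the geodesics almost for free. Granting such an $L$, I would equip it with the rigging $\zeta$ and form the rigged vector field $\xi$. By the first Lemma of Section 2 the null foliation $F$ generated by $\xi$ is a Riemannian flow whose rigged metric $\widetilde{g}$ is bundle-like, and since $L$ is totally geodesic ($B=0$) the integral curves of $\xi$ are, up to reparametrization, null geodesics of $(M,g)$.

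Next I would feed the constant curvature hypothesis into Corollary \ref{transv constant}: because $(M,g)$ has constant sectional curvature $c<0$, the flow $F$ has constant transverse curvature $c<0$, i.e. it is transversally hyperbolic. The decisive numerology is that $L$ is compact of dimension $n-1\ge 5$, so $F$ has codimension $q=n-2\ge 4$. I would then invoke the hyperbolic case of the classification recalled after Corollary \ref{transv constant}: either all orbits of $F$ are closed and form a Seifert fibration, or none of the orbits is closed, in which case necessarily $\dim L\in\{3,4\}$, that is $n\in\{4,5\}$. As $n\ge 6$ the second alternative is impossible, forcing every orbit of $F$ to be closed.

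Each closed orbit is a closed integral curve of $\xi$, hence a closed null pre-geodesic of $(M,g)$, which can be reparametrized into a periodic null geodesic with $\gamma'(0)=\gamma'(1)$. Since the orbits form a Seifert fibration of the $(n-1)$-dimensional compact manifold $L$, the leaf space is a positive-dimensional orbifold and there are uncountably many distinct orbits; this delivers infinitely many periodic null geodesics, as claimed.

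The hard part will be the first reduction: producing a compact totally geodesic null hypersurface at all. This is precisely where the hypotheses \emph{compact} and \emph{non totally vicious} must enter, and where the contrast with the earlier theorem (which \emph{forbids} such a hypersurface in the causal case for $c<0$, $n\ge 6$) is essential, since a compact spacetime is never causal. The natural route is causal-theoretic: compactness forces the chronology-violating region $V=\{p : p\in I^{+}(p)\}$ to be nonempty, while the non totally vicious hypothesis gives $V\neq M$, so $\partial V$ is a nonempty closed, hence compact, achronal null set ruled by null generators. The genuinely delicate step, and the crux of the whole argument, is to upgrade this achronal boundary to a smooth \emph{totally geodesic} null hypersurface; this is where the constant curvature of $(M,g)$ has to be exploited, and once total geodesy is secured the flow argument above closes the proof.
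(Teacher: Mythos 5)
Your second half --- the Riemannian-flow argument --- is exactly the paper's: once a compact totally geodesic null hypersurface $L$ is available, the rigged field $\xi$ generates a flow with constant transverse curvature $c<0$ by Corollary \ref{transv constant}, and in Molino's hyperbolic classification the ``no closed orbits'' alternative forces $\dim L\in\{3,4\}$, which is excluded by $\dim L=n-1\geq 5$; hence all orbits are closed and one extracts infinitely many periodic null geodesics. (Like the paper, you pass from closed orbits of $\xi$, which are null pregeodesics, to periodic geodesics without further comment; that step is common to both arguments, so it is not held against you.)

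The genuine gap is the part you explicitly leave open: producing $L$ at all. Your sketch via the chronology-violating set $V$ does not close and is not the paper's route. The boundary $\partial V$ is in general only a Lipschitz achronal set ruled by null generators; nothing in your sketch upgrades it to a smooth hypersurface, let alone a totally geodesic one, and you name no mechanism by which constant curvature could accomplish this --- you only assert that it ``has to be exploited.'' The paper's mechanism is a concrete four-step package: (i) by Minguzzi's theorem \cite{Ming}, a compact non totally vicious spacetime contains a null \emph{line}, i.e.\ an inextendible achronal null geodesic (this, not the rough boundary $\partial V$, is the correct causal-theoretic output of compactness plus non total viciousness); (ii) a compact Lorentzian manifold of constant curvature is geodesically complete \cite{Car1,Klin}; (iii) constant curvature forces vanishing null sectional curvature, hence $Ric(u,u)=0$ for every null vector $u$, so the null convergence condition holds; (iv) Galloway's null splitting theorem \cite{Ga} then places the null line inside a smooth topologically closed achronal totally geodesic null hypersurface $L$, which is compact because $M$ is. It is precisely the null splitting theorem --- applied to a null line, with completeness and NCC as inputs --- that performs the ``delicate upgrade'' you could not supply; without steps (i)--(iv) your argument never produces the hypersurface on which everything else depends.
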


\begin{proof}
Since $M$ is compact and not totally vicious it contains a null line $\eta$, see \cite[Theorem 12)]{Ming}. Moreover, since it has constant curvature, it is complete (\cite{Car1, Klin}) and satisfies the null convergence condition. This is because constant curvature is equivalente to zero null
sectional curvature, \cite[Prop. 2.3 p. 296.]{Har82}, then $Ricc(u,u)=0$ for
any null vector, see for example \cite[Lemma 5.1, p. 152]{GutOle09}. Using the null completeness and the null convergence condition, the null line $\eta$ is contained in a smooth (topologically) closed achronal totally geodesic null hypersurface $L$ \cite[Theorem~IV.1.]{Ga}. It follows that $L$ is a compact totally geodesic null hypersurface of dimension at least $5$. Since the flow induced by the null generator is transversally hyperbolic, all the orbits are periodic.
\end{proof}

In case there exists on $M$ a closed timelike vector field, we can
relax the constant curvature assumption as well as the dimensional restriction. The restriction $dim M \geq 4$ is needed to leave room to the transverse curvature.

\begin{theorem}
Let $(M,g)$ be a $n\geq 4$-dimensional non totally vicious compact spacetime with negative sectional curvature on spacelike planes. Suppose it is null complete, satisfies the null convergence
condition and admits a closed timelike vector field $\zeta$. Then it has infinite periodic null geodesics.
\end{theorem}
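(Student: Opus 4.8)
The plan is to run the scheme of the proof of Theorem~\ref{closed geod}, replacing the constant-curvature hypotheses by the pair (negative spacelike curvature, closed timelike rigging), and using the closedness of $\zeta$ exactly where the bound $n\ge 6$ was used before. First I would produce a compact totally geodesic null hypersurface. Since $M$ is compact and not totally vicious, by \cite[Theorem 12]{Ming} it contains a null line $\eta$. By hypothesis $M$ is null complete and satisfies the null convergence condition, so Galloway's null splitting machinery \cite[Theorem~IV.1]{Ga} places $\eta$ inside a smooth, topologically closed, achronal, totally geodesic null hypersurface $L$. As a closed subset of the compact manifold $M$, $L$ is compact.

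Next I would install the rigging and read off the transverse geometry. The field $\zeta$ is timelike, hence nowhere tangent to $L$ (the tangent spaces of a null hypersurface contain no timelike directions), so it is an admissible rigging for $L$. Because $\zeta$ is closed, the null foliation $F$ generated by the rigged vector field $\xi$ is an \emph{isometric} Riemannian flow whose bundle-like metric is the rigged metric $\widetilde g$, with $\xi$ being $\widetilde g$-Killing; this is the content of the Lemma of Section~2. Moreover $\zeta$ closed gives $d\omega=0$, so Proposition~\ref{curvat} together with Remark~\ref{rmq closed} yields, for orthonormal $X,Y\in\mathcal S^{\zeta}$,
\[
K^{T}(X,Y)=\widetilde K(X,Y)=K(X,Y).
\]
The screen planes are spacelike, so the negative spacelike curvature hypothesis forces $K^{T}<0$: the flow $F$ is transversally hyperbolic. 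This is where $n\ge 4$ enters, so that the transverse codimension $n-2\ge 2$ leaves room for a transverse sectional curvature.

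The heart of the argument is to show that an isometric transversally hyperbolic flow on the compact manifold $L$ has all its orbits closed. By Molino's classification \cite[Theorem~4.1]{Mol}, a transversally hyperbolic Riemannian flow on a compact manifold either has all orbits closed (a Seifert fibration) or has no closed orbit, the latter only when $\dim L\in\{3,4\}$ and the flow is conjugate to a fixed model flow. The point is that these models are not isometric: for an isometric flow the closure of $\{\phi_{t}\}$ in $\mathrm{Isom}(L,\widetilde g)$ is a torus $T^{m}$, and the orbit closures are its flat torus orbits. If some orbit were not closed then $m\ge 2$, and the extra commuting $\widetilde g$-Killing fields, being foliate, project to a nonzero transverse Killing field on $Q=\mathcal S^{\zeta}$; a transverse Bochner argument shows that strictly negative transverse curvature admits no nontrivial transverse Killing field, a contradiction. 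Hence $m=1$, the flow is periodic, and every orbit of $\xi$ is closed. (For $n\ge 6$ the non-closed Molino case is already excluded by dimension, as in Theorem~\ref{closed geod}; the closed rigging is what disposes of the remaining cases $n=4,5$.)

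Finally I would translate closed orbits into geodesics. Each leaf of $F$ is an integral curve of the null field $\xi$ and, because $L$ is totally geodesic ($B=0$, whence $\nabla_{\xi}\xi$ is proportional to $\xi$), a null pregeodesic of $(M,g)$; an affine reparametrization turns a closed orbit into a periodic null geodesic. Distinct leaves give distinct periodic null geodesics, and $L$ is foliated by infinitely many of them, giving the stated conclusion. I expect the main obstacle to be the middle step: ruling out Molino's non-closed hyperbolic model in dimensions $n=4,5$, which is precisely where being \emph{isometric}, guaranteed by the closedness of $\zeta$, is indispensable, through the incompatibility of strictly negative transverse curvature with the flat torus orbit closures of an isometric flow.
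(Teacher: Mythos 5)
Your proposal reproduces the paper's proof in its first two steps: Minguzzi's theorem \cite{Ming} supplies a null line, Galloway's null splitting theorem \cite{Ga} (using null completeness and the NCC) places it in a topologically closed, achronal, totally geodesic null hypersurface $L$, compact because $M$ is; then the closed timelike field $\zeta$ serves as a rigging, the null flow is isometric with bundle-like metric $\widetilde g$, and Proposition~\ref{curvat} with Remark~\ref{rmq closed} gives $K^{T}=\widetilde K=K<0$ on screen planes. Where you genuinely diverge is the decisive last step. The paper simply cites \cite[Theorem~7]{Fran} (a Killing foliation of a compact manifold with negative transverse curvature is closed), whereas you sketch a self-contained substitute: the closure of the flow in $\mathrm{Isom}(L,\widetilde g)$ is a torus $T^{m}$, a non-closed orbit forces $m\geq 2$, the extra commuting Killing fields are foliate and project to a nonzero transverse Killing field, and a transverse Bochner argument with $K^{T}<0$ rules this out. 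This is essentially a proof of the cited result in the isometric case, and it is sound; note only that the Bochner integration by parts requires the flow to be taut, which holds here because $\xi$ is a unit $\widetilde g$-Killing field, so its orbits are $\widetilde g$-geodesics. One caveat: your framing through Molino's classification \cite[Theorem~4.1]{Mol} is misplaced, since ``transversally hyperbolic'' there (and in the paper's definition) means \emph{constant} negative transverse curvature; that hypothesis holds in Theorem~\ref{closed geod} but not here, so the dimension-based exclusion of the non-closed case for $n\geq 6$ is not available and your torus-plus-Bochner argument must carry every $n\geq 4$ --- which it does, so the slip is harmless. In sum, your route buys independence from the reference \cite{Fran} at the price of verifying a transverse Bochner theorem; the paper's route is shorter but leans entirely on that citation.
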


\begin{proof}
Following the proof of Theorem \ref{closed geod}, we can can prove that there exists a compact totally geodesic null hypersurface $L$.
Using $\zeta$ as a closed rigging for $L$, it follows that the rigged vector
field $\xi$ is a Killing (in fact parallel) vector field on $L$ hence the flow defined by $\xi$
is isometric, in particular the foliation defined by $\xi$ is a Killing
foliation. Since the flow have transverse negative curvature, Proposition \ref{curvat}, from \cite[Theorem~7]{Fran} it is a closed foliation which
proves the existence of infinite periodic orbit in $M$.
\end{proof}

In particular, if $M$ is compact and the sectional curvature is constant and negative, it is complete and the null sectional curvature is zero, so it satisfies the null convergence condition as we commented in the proof of Theorem \ref{closed geod}. The presence of a closed timelike vector field allows us to prove Theorem \ref{closed geod} in $dim(M)\geq 4$.

\begin{corollary}
Let $(M,g)$ be a non totally vicious compact
$n$-dimensional spacetime ($n\geq4$) with constant negative curvature. If
there exists a closed timelike vector field then
it has infinite periodic null geodesics.
\end{corollary}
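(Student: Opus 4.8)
The plan is to obtain this statement as an immediate specialization of the theorem immediately preceding it (the one assuming negative sectional curvature on spacelike planes, null completeness, the null convergence condition, and a closed timelike vector field). The whole task therefore reduces to checking that the single hypothesis ``constant negative curvature,'' for a compact $M$, already forces three of that theorem's four geometric inputs, the fourth being supplied directly by assumption.

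First I would note that constant sectional curvature $c<0$ means that \emph{every} nondegenerate tangent plane has sectional curvature $c$; in particular every spacelike plane does, so the hypothesis of negative sectional curvature on spacelike planes is automatic. Next, for the completeness input I would invoke the classical fact that a compact Lorentzian manifold of constant curvature is geodesically complete (\cite{Car1, Klin}); in particular it is null complete.

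For the null convergence condition I would reuse the argument already recorded in the proof of Theorem \ref{closed geod}: by \cite[Prop.~2.3 p.~296]{Har82} constant sectional curvature is equivalent to vanishing null sectional curvature, and by \cite[Lemma~5.1 p.~152]{GutOle09} vanishing null sectional curvature yields $\mathrm{Ric}(u,u)=0$ for every null vector $u$, so $\mathrm{Ric}(u,u)\ge 0$ and the null convergence condition holds. The last ingredient, a closed timelike vector field, is granted by hypothesis, and $M$ is a non totally vicious compact spacetime with $n\ge 4$; hence all hypotheses of the preceding theorem are met and its conclusion gives infinitely many periodic null geodesics.

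I expect no real obstacle here, since the corollary is essentially a bookkeeping consequence of the preceding theorem; the only conceptual point worth flagging is \emph{why} the dimension may be lowered to $n\ge 4$, as opposed to $n\ge 6$ in Theorem \ref{closed geod}. This is exactly the role of the closed timelike vector field: used as a closed rigging for the compact totally geodesic null hypersurface produced as in Theorem \ref{closed geod}, it makes the rigged vector field Killing, so the null flow is isometric with transverse negative curvature, and the closedness criterion \cite[Theorem~7]{Fran} forces the orbits to close without needing the higher-dimensional Molino classification that imposed $n\ge 6$. Verifying that this rigging-based mechanism is precisely what the preceding theorem invokes is the only step requiring attention, and it has already been carried out there.
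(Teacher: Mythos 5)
Your proposal is correct and takes essentially the same route as the paper: the paper justifies this corollary in the paragraph preceding it, deriving the hypotheses of the previous theorem exactly as you do — compactness plus constant negative curvature gives completeness via \cite{Car1,Klin}, vanishing null sectional curvature gives the null convergence condition via \cite{Har82} and \cite{GutOle09}, and the closed timelike vector field is supplied by hypothesis. Your explanation of why the dimension drops to $n\geq 4$ (the rigged vector field becomes Killing, so the isometric flow with negative transverse curvature has closed orbits by \cite[Theorem~7]{Fran}, bypassing the Molino classification) is precisely the mechanism of the preceding theorem.
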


A bound on the ricci curvature also gives the following.

\begin{theorem}
\label{ricc bound} Let $(M,g)$ be a spacetime and $L$ a
compact totally geodesic null hypersuraface. Suppose $\zeta$ is a
closed rigging for $L$ with associate null rigging $N$ and rigged vector field
$\xi$. If there exists a negative constant $c$ such that for all unitary
$X\in\mathcal{S}^{\zeta}$, $Ric(X,X)-2g(R(\xi,X)X,N)\leq c$, then all the orbits of
$\xi$ are closed, so there exist infinite periodic null geodesics.
\end{theorem}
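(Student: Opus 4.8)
The plan is to recast the displayed hypothesis as an upper bound on the \emph{transverse} Ricci curvature of the null flow, and then to run a transverse Bochner argument that forces every orbit to close. Since $\zeta$ is a closed rigging and $L$ is totally geodesic, the Lemma of Section~2 gives that the null foliation $F$ generated by $\xi$ is an isometric Riemannian flow whose bundle-like metric is the rigged metric $\tilde g$; in particular $\xi$ is Killing for $\tilde g$. As $\tilde g(\xi,\xi)=1$, the Killing relation yields $\tilde g(\tilde\nabla_\xi\xi,Y)=-\tilde g(\tilde\nabla_Y\xi,\xi)=-\tfrac12 Y(\tilde g(\xi,\xi))=0$, so $\xi$ is a unit geodesic field and $F$ is totally geodesic, hence taut. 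The integral curves of $\xi$ are null geodesics of $M$ because $L$ is totally geodesic, so a \emph{closed} leaf is precisely a periodic null geodesic; it therefore suffices to show that every leaf of $F$ is closed, and the $(n-2)$-parameter family of leaves then produces infinitely many such geodesics.

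Next I would translate the hypothesis into transverse curvature. Fix a unit $X\in\mathcal S^\zeta$ and complete it to a local frame $\{\xi,N,e_1,\dots,e_{n-2}\}$ with $\{e_i\}$ $g$-orthonormal in $\mathcal S^\zeta$, $g(\xi,N)=1$ and $g(N,N)=0$. Tracing the curvature in this pseudo-orthonormal frame, whose $g$-dual is $\{N,\xi,e_i\}$, and using the pair symmetry $g(R(N,X)X,\xi)=g(R(\xi,X)X,N)$, one gets
\[
Ric(X,X)=2\,g(R(\xi,X)X,N)+\sum_{i=1}^{n-2} g(R(e_i,X)X,e_i),
\]
so the assumption becomes $\sum_i g(R(e_i,X)X,e_i)\le c$. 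Since $\zeta$ is closed, $d\omega=0$, and Proposition~\ref{curvat} with Remark~\ref{rmq closed} identifies each sectional curvature $K(X,e_i)$ with the transverse one $K^T(X,e_i)$; summing yields
\[
Ric^T(X,X)=\sum_{i=1}^{n-2} g(R(e_i,X)X,e_i)=Ric(X,X)-2\,g(R(\xi,X)X,N)\le c<0
\]
for every unit $X\in\mathcal S^\zeta$. Thus $F$ has strictly negative transverse Ricci curvature, bounded above by $c$.

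Finally I would invoke the transverse Bochner technique. By Molino's structure theory the closures of the leaves of the isometric flow $F$ on the compact manifold $L$ are the orbits of the structural (commuting) sheaf, whose local sections are transverse Killing fields, and all leaves are closed exactly when this sheaf is trivial. A transverse Killing field $Y$ obeys a transverse Bochner identity which, integrated over $L$ --- legitimate because $F$ is taut, so the basic mean curvature vanishes and the transverse divergence theorem applies --- gives $\int_L |\nabla^T Y|^2=\int_L Ric^T(Y,Y)$. The left-hand side is nonnegative while the right-hand side is $\le 0$ by the previous step, so both vanish and $Ric^T\le c<0$ forces $Y\equiv 0$. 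Hence the structural sheaf is trivial, every leaf of $F$ is closed, and the theorem follows. I expect this last step to be the main obstacle: one must assemble the transverse Bochner formula and its integrated form correctly for the Killing flow, verify that tautness validates the foliated integration by parts, and confirm that the obstruction to leaf-closure is genuinely carried by transverse Killing fields. By contrast, the frame computation of the second step is routine once the pseudo-orthonormal frame is in place.
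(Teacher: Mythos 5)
Your proposal is correct and follows essentially the same route as the paper: both reduce the curvature hypothesis to the bound $Ric^{T}(X,X)\le c<0$ for the isometric flow defined by $\xi$, and then conclude that all orbits are closed from the fact that a Killing (isometric) flow on a compact manifold with negative transverse Ricci curvature is closed. The differences are presentational rather than structural: the paper gets the transverse Ricci identity by citing \cite[Corollary 4.9]{GB} (via $\widetilde{K}(X,\xi)=0$ and the rigged-metric Ricci $\widetilde{Ric}(X,X)=Ric(X,X)-2g(R(\xi,X)X,N)$) and finishes by citing \cite[Theorem 6]{Fran}, whereas you re-derive the same identity by a pseudo-orthonormal frame trace together with Proposition \ref{curvat}, and you sketch the transverse Bochner/Molino argument that is the content of the cited theorem.
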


\begin{proof}
Since the rigging $\zeta$ is closed and $L$ is totally geodesic, from
\cite[Corollary 4.9]{GB}, for all $X\in\mathcal{S}^{\zeta},\widetilde{K}%
(X,\xi)=0$. Moreover from Remark \ref{rmq closed}, $K^{T}(X,Y)=\widetilde
{K}(X,Y)$ $\forall\,X,Y\in\mathcal{S}^{\zeta}$. It follows that
$Ric^{T}(X,X)=\widetilde{Ric}(X,X)$. From \cite[Corollary 4.9]{GB},
$\widetilde{Ric}(X,X)=Ric(X,X)-2g(R(\xi,X)X,N)$, hence $Ric^{T}
(X,X)\leq c<0$. The conclusion follows from \cite[Theorem~6]%
{Fran}.
\end{proof}

\begin{theorem}
Let $(M,g)$ be a non totally vicious compact spacetime
with Ricci curvature bounded above by a negative constant for unitary spacelike vectors.
Suppose it admits a parallel null vector field $V$ and a closed vector
field $\zeta$ transverse to $V$. Then the orbits of $V$ are periodic.
\end{theorem}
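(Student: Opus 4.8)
The plan is to reduce everything to Theorem \ref{ricc bound} by manufacturing a compact totally geodesic null hypersurface out of the parallel field $V$ and using $\zeta$ as a closed rigging. First I would observe that a parallel null field is nowhere vanishing and Killing, and that the orthogonal distribution $V^{\perp}$ is parallel, hence integrable; since $\nabla V=0$ the second fundamental form of each leaf $L$ vanishes, so the leaves are totally geodesic null hypersurfaces with $V$ as null generator, and $\omega=g(V,\cdot)$ is parallel, hence closed. Because $\zeta$ is transverse to $V$ it is nowhere tangent to such an $L$ (note $\zeta\in TL=V^{\perp}$ would force $g(\zeta,V)=0$), so $\zeta$ is a closed rigging for $L$. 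By the opening Lemma the null flow on $L$ is then an isometric Riemannian flow, and the rigged vector field $\xi$ is proportional to $V$, so the orbits of $\xi$ and of $V$ have the same images; closedness of one is periodicity of the other.

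Next I would verify that the curvature hypothesis of Theorem \ref{ricc bound} holds automatically. Since $V$ is parallel, $R(\cdot,\cdot)V=0$; writing $\xi=\lambda V$ and using the pair symmetry $g(R(\xi,X)X,N)=\lambda\,g(R(X,N)V,X)$, the right-hand side vanishes. Hence for every unit $X\in\mathcal{S}^{\zeta}$,
\[
Ric(X,X)-2g(R(\xi,X)X,N)=Ric(X,X)\le c<0,
\]
the inequality being exactly the spacelike Ricci bound, since screen vectors are unit spacelike. Thus the hypotheses of Theorem \ref{ricc bound} are met on any compact leaf, and its conclusion gives that all orbits of $\xi$, hence of $V$, are closed, i.e. periodic. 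The same identity, combined with Proposition \ref{curvat}, shows that the transverse sectional and Ricci curvatures of the flow agree with the ambient spacelike ones, so the flow is transversally negatively curved as required.

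The main obstacle is the existence of a \emph{compact} such leaf, because the leaves of the foliation defined by the nowhere-zero closed form $\omega$ need not be compact: they are compact exactly when the group of periods of $\omega$ has rank one, and are dense otherwise. This is precisely where I expect the non-total-viciousness hypothesis to enter. By \cite{Ming} a compact non-totally-vicious spacetime contains a null line, and I would try to use the interplay of this achronal complete null geodesic with the parallel null foliation and the compactness of $M$ to locate a compact leaf, equivalently to exclude the dense-leaf case. An alternative route, which avoids singling out a leaf, is to realise the orbits of $V$ directly as an isometric Riemannian flow on the compact manifold $M$ by building a global bundle-like metric from $\zeta$, for instance $h=g+(\omega-\eta)\otimes(\omega-\eta)$ with $\eta=g(N,\cdot)$, and then to apply \cite[Theorem 6]{Fran} on $M$; here the technical crux is the holonomy-invariance $L_{V}\eta=i_{V}d\eta=0$, which should follow from $\zeta$ being closed together with $\nabla V=0$, plus a check that the full transverse Ricci, including the $N$-direction, remains negative. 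In either route the curvature input is routine and the structural compactness step is the genuine difficulty.
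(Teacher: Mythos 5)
Your reduction and curvature computation coincide with the paper's own proof: the leaves of $V^{\perp}$ are totally geodesic null hypersurfaces, $\zeta$ serves as a closed rigging since transversality to $V$ means $g(\zeta,V)\neq 0$, the rigged field $\xi$ is proportional to $V$, and since $V$ is parallel one has $R(\cdot,\cdot)V=0$, hence $g(R(\xi,X)X,N)=0$, so that $Ric^{T}(X,X)=Ric(X,X)\le c<0$ and the conclusion follows from Theorem \ref{ricc bound}, i.e.\ from \cite[Theorem~6]{Fran}. Up to this point you have reconstructed the paper's argument exactly.

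However, the step you yourself flag as ``the genuine difficulty'' is a genuine gap, and it is not one you can close with the tools you list: the paper settles it by citing \cite[Theorem 3.4]{AGR}, which states precisely that in a compact non-totally-vicious spacetime a parallel null vector field is tangent to a \emph{compact} totally geodesic null hypersurface. Neither of your fallback routes substitutes for this. The null line produced by \cite{Ming} need not be tangent to any leaf of $V^{\perp}$, and invoking the null splitting theorem of \cite{Ga} would require null completeness and the null convergence condition, neither of which is assumed here; in fact the hypothesis $Ric(X,X)\le c<0$ on unit spacelike vectors gives, by continuity, $Ric(u,u)\le 0$ on null vectors, so NCC is anything but automatic. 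Your alternative of realizing the orbits of $V$ as an isometric Riemannian flow on all of $M$ and applying \cite[Theorem~6]{Fran} there founders exactly where you suspect: the transverse space of the flow on $M$ contains the $N$-direction, and the hypotheses give no control whatsoever on terms like $g(R(N,e_i)e_i,N)$, so negativity of the full transverse Ricci cannot be verified. The structural existence of the compact hypersurface is thus the essential missing ingredient, and it must be imported from \cite[Theorem 3.4]{AGR}, where non-total-viciousness is used in an essential way.
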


\begin{proof}
Since $(M,g)$ is non totally vicious, from \cite[Theorem
3.4]{AGR}, $V$ is tangent to a compact totally geodesic null hypersurface $L$. Using $\zeta$ as a rigging for $L$, and following
the proof of Theorem \ref{ricc bound}, the transverse Ricci curvature of the
flow defined by the rigged vector field is given by $Ric^{T}%
(X,X)=Ric(X,X)-2g(R(\xi,X)X,N)$. As $V$ is parallel and proportional to
$\xi$, we have $g(R(\xi,X)X,N)=0$. Hence $Ric^{T}(X,X)=Ric(X,X)$. The conclusion follows
from \cite[Theorem~6]{Fran}.
\end{proof}

\begin{theorem}
Let $(M,g)$ be a $n(\geq 4)$-dimensional non totally vicious compact spacetime with negative sectional curvature on spacelike planes. Suppose it is null complete, NCC holds and there exists a closed timelike vector field. Then there are infinite periodic null geodesics.
\end{theorem}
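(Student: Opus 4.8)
The plan is to reduce the statement to the existence of a \emph{closed} isometric Riemannian flow on a compact totally geodesic null hypersurface, and then read off the periodic geodesics as its compact leaves. First I would reproduce the construction in the proof of Theorem \ref{closed geod} to manufacture the hypersurface: since $M$ is compact and non totally vicious it contains a null line $\eta$ (Minguzzi, \cite{Ming}), and since $M$ is null complete and NCC holds, Galloway's theorem \cite{Ga} places $\eta$ inside a smooth topologically closed achronal totally geodesic null hypersurface $L$. Compactness of $M$ together with the topological closedness of $L$ forces $L$ to be compact, of dimension $n-1\geq 3$.

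Next I would take the closed timelike vector field $\zeta$ as a closed rigging for $L$. Because $\zeta$ is closed, \cite[Proposition~3.14]{GB} gives that the rigged vector field $\xi$ is Killing for the rigged metric $\widetilde{g}$, so by the Lemma of Section~2 the null foliation $F$ generated by $\xi$ is an isometric Riemannian flow whose bundle-like metric is $\widetilde{g}$; in particular $F$ is a Killing foliation. The transverse geometry is pinned down by Proposition \ref{curvat}: for unitary orthogonal $X,Y\in\mathcal{S}^{\zeta}$ one has $K^{T}(X,Y)=K(X,Y)$, and the hypothesis of negative sectional curvature on spacelike planes therefore forces $F$ to have strictly negative transverse sectional curvature.

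The final step is to invoke the rigidity of such flows on compact manifolds. Since $L$ is compact, $\xi$ is complete, and $F$ is a Killing foliation with negative transverse curvature, so \cite[Theorem~7]{Fran} applies and shows that $F$ is closed, i.e. every leaf is compact. The leaves of $F$ are the integral curves of the null generator $\xi$ of the totally geodesic hypersurface $L$, hence are reparametrizations of null geodesics of $(M,g)$; each compact leaf is thus a periodic null geodesic, and since $F$ has infinitely many distinct leaves we obtain infinitely many periodic null geodesics.

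The main obstacle I expect is not any single computation but the verification that the abstract foliation-theoretic input applies: one must be sure that $F$ genuinely satisfies the hypotheses of \cite[Theorem~7]{Fran}, namely that it is an isometric (Killing) flow on a compact manifold carrying a bundle-like metric of strictly negative transverse sectional curvature. This is exactly what the Lemma of Section~2 (isometric flow, bundle-like $\widetilde{g}$) and Proposition \ref{curvat} (transverse curvature equal to the ambient spacelike-plane curvature) are designed to supply; the remaining care is to confirm that a closed leaf of $F$ returns with matched velocity, which holds because the flow of the Killing field $\xi$ acts by isometries and the leaf is precisely an orbit of this flow.
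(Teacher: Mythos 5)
Your proposal is correct and follows essentially the same route as the paper's own proof: extract a null line via Minguzzi's result, apply Galloway's null splitting theorem to obtain a compact totally geodesic null hypersurface, use the closed timelike field as a closed rigging so that $\xi$ is Killing and the flow is isometric with negative transverse curvature by Proposition \ref{curvat}, and conclude via \cite[Theorem~7]{Fran}. Your write-up is in fact slightly more careful than the paper's, spelling out why compact orbits of $\xi$ are genuinely periodic null geodesics of $(M,g)$.
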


\begin{proof}
Following the proof of Theorem \ref{closed geod}, we can can prove that there exists a compact totally geodesic null hypersurface $L$. Using the closed timelike vector field $\zeta$ as a rigging for $L$, it follows that the rigged vector field $\xi$ is  Killing, hence the flow defined by $\xi$ is isometric, in particular the foliation defined by $\xi$ is a Killing foliation. Since the flow have transverse negative curvature, from \cite[Theorem 7]{Fran}, it has compact orbits which shows the existence of infinite periodic null geodesics.
\end{proof}

\begin{corollary}
Let $(M,g)$ be a $n(\geq 4)$-dimensional non totally vicious compact spacetime with constant negative
curvature. If there exists on $M$ a closed timelike vector field. Then there exist infinite periodic null geodesics.
\end{corollary}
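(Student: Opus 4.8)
The plan is to recognize this corollary as a direct specialization of the theorem immediately preceding it, so that the entire task reduces to checking that the hypothesis of constant negative curvature forces each of that theorem's three analytic assumptions: negative sectional curvature on spacelike planes, null completeness, and the null convergence condition (NCC). Once these are in place, the closed timelike vector field provided in the hypothesis supplies the last ingredient and the previous theorem applies verbatim. None of the steps requires a new geometric construction; the work is purely in verifying hypotheses that were already isolated and exploited in the proof of Theorem \ref{closed geod}.

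First I would observe that if $(M,g)$ has constant curvature $c<0$, then by definition $K(\Pi)=c<0$ for every nondegenerate plane $\Pi$, and in particular for every spacelike plane, so the requirement of negative sectional curvature on spacelike planes is automatic. Next I would establish completeness: a compact Lorentzian manifold of constant curvature is geodesically complete by the results of Carri\`ere and Klingler (\cite{Car1,Klin}), precisely as invoked in the proof of Theorem \ref{closed geod}; in particular $(M,g)$ is null complete. Finally, to obtain the null convergence condition I would recall that constant curvature is equivalent to identically vanishing null sectional curvature by \cite[Prop.~2.3 p.~296.]{Har82}, and that vanishing null sectional curvature yields $Ric(u,u)=0$ for every null vector $u$ (see \cite[Lemma~5.1, p.~152]{GutOle09}); hence $Ric(u,u)\geq 0$ for all null $u$ and NCC holds, in fact with equality.

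With $n\geq 4$, the three curvature-and-completeness hypotheses verified, and a closed timelike vector field present by assumption, every condition of the preceding theorem is met, so that theorem yields infinitely many periodic null geodesics and the corollary follows. The only step carrying genuine weight is the completeness of compact constant-curvature Lorentzian manifolds, which rests on the deep structural results of Carri\`ere and Klingler and is used here as a black box; everything else is immediate once the standard equivalence between constant curvature and vanishing null sectional curvature is recorded. Consequently I do not anticipate any real obstacle beyond correctly citing these external inputs, exactly as done earlier in the paper.
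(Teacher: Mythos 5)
Your proposal is correct and matches the paper's own (implicit) argument exactly: the paper derives this corollary by specializing the preceding theorem, noting—just as you do—that constant negative curvature gives negative curvature on spacelike planes, that compactness plus constant curvature yields completeness via Carrière and Klingler, and that the null convergence condition follows since constant curvature forces vanishing null sectional curvature and hence $Ric(u,u)=0$ for null $u$. No further comment is needed.
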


\begin{theorem}
Let $(M,g)$ be a null and spacelike complete Lorentzian
manifold of dimension $\geq4$ and $L$ a (topological) closed embedded and
totally geodesic null hypersurface with $\zeta$ a closed and orthogonally
Killing rigging with $\nabla_{U}\zeta\in\mathfrak{X}(L)$ for any
$U\in\mathfrak{X}(L)$. Suppose that the sectional curvature of $(M,g)$ is greater than a
positive constant $c$ for any plane $\Pi\subset\mathcal{S}^{\zeta}$. Then $L$
is a quotient of a standard cylinder. Moreover $(M,g)$ has a spatial
periodic geodesic.
\end{theorem}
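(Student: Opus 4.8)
The plan is to transport the problem into the Riemannian geometry of $(L,\widetilde g)$ via the closed rigging, exploit positivity of the transverse curvature through a de Rham--Bonnet--Myers argument, and then pull the resulting closed geodesics back to $(M,g)$.

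First I would fix the flow structure. Since $\zeta$ is closed, the Lemma of Section~2 makes the null foliation $F$ an isometric Riemannian flow with bundle-like metric $\widetilde g$, so in particular $\xi$ is a Killing field for $\widetilde g$ and $L_{\xi}\widetilde g=0$; moreover Remark~\ref{rmq closed} gives $d\omega=0$, hence the screen $\mathcal S^{\zeta}$ is integrable and, for unitary orthogonal $X,Y\in\mathcal S^{\zeta}$, Proposition~\ref{curvat} yields $K^{T}(X,Y)=\widetilde K(X,Y)=K(X,Y)>c>0$. The key structural observation is that $\xi$ is $\widetilde g$-parallel: the bilinear form $(U,V)\mapsto\widetilde g(\widetilde\nabla_{U}\xi,V)$ is antisymmetric because $L_{\xi}\widetilde g=0$ and symmetric because $\omega$ (the $\widetilde g$-dual of $\xi$) is closed, i.e. $d\omega(U,V)=\widetilde g(\widetilde\nabla_{U}\xi,V)-\widetilde g(\widetilde\nabla_{V}\xi,U)=0$; being both symmetric and antisymmetric it vanishes, so $\widetilde\nabla\xi=0$.

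Second I would establish that $(L,\widetilde g)$ is complete. As in the proof of Proposition~\ref{prop1}, the hypotheses that $\zeta$ is closed, orthogonally Killing, and satisfies $\nabla_{U}\zeta\in\mathfrak X(L)$ are designed precisely to force $\tau=0$ together with $\nabla_{X}Y\in\mathcal S^{\zeta}$ for $X,Y\in\mathcal S^{\zeta}$, that is $\overline C=0$; equivalently $\nabla=\widetilde\nabla$ on $\mathfrak X(L)$, so every $\widetilde g$-geodesic of $L$ is a $g$-geodesic of $M$. Since $M$ is null and spacelike complete and $L$ is closed and embedded, these geodesics extend to all of $\mathbb R$, whence $(L,\widetilde g)$ is complete. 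I expect this verification of $\overline C=0$ to be the main obstacle, since ``orthogonally Killing'' is strictly weaker than Killing and one must check by hand that it still annihilates the terms that were killed by the full Killing hypothesis in Proposition~\ref{prop1}.

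Third, with $\xi$ a complete $\widetilde g$-parallel unit field on the complete manifold $(L,\widetilde g)$, the de Rham decomposition theorem splits the universal cover isometrically as $\mathbb R\times N$, where the $\mathbb R$-factor is the $\xi$-direction and $N$ is a leaf of the parallel integrable screen distribution; $N$ carries the transverse metric, so its sectional curvature equals $K^{T}>c>0$, and Bonnet--Myers makes $N$ compact. Thus the universal cover of $L$ is the standard cylinder $\mathbb R\times N$, and $L$ is a quotient of it. Finally, the screen leaves of $L$ are compact, being quotients of $N$; since $\dim N=n-2\ge 2$, such a compact leaf $S$ carries a closed geodesic, and by Proposition~\ref{prop3} (the screen is integrable and $C(X,X)=0$, which is contained in $\overline C=0$) this closed curve is simultaneously a $g$-geodesic. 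It is spacelike because it lies in the screen, and it is periodic, which provides the desired spatial periodic geodesic of $(M,g)$.
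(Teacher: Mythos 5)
Your proposal is correct and follows essentially the same path as the paper's proof: curvature transfer $K^{T}=\widetilde K=K>c$ on screen planes, $\widetilde g$-parallelism of $\xi$, completeness of $(L,\widetilde g)$ via $\overline C=0$ and the resulting $g$/$\widetilde g$-geodesic correspondence, splitting of the universal cover as $\mathbb{R}\times N$ with $N$ compact by Myers, and a closed geodesic in a compact screen leaf promoted to a $g$-geodesic (where the paper uses the leaf's second fundamental form $\mathbb{I}^{S^{\zeta}}(X,Y)=C(X,Y)\xi+B(X,Y)N$, you invoke Proposition \ref{prop3}, which amounts to the same computation). The one step you flag as the main obstacle is in fact immediate: since $TL=\xi^{\perp}$, the hypothesis $\nabla_{U}\zeta\in\mathfrak{X}(L)$ is literally equivalent to $\tau(U)=g(\nabla_{U}\zeta,\xi)=0$, while closedness plus the orthogonal Killing condition make $g(\nabla_{X}\zeta,Y)$ simultaneously symmetric and antisymmetric for $X,Y\in\mathcal{S}^{\zeta}$, so $C(X,Y)=-g(\nabla_{X}\zeta,Y)-\frac{1}{2}g(\zeta,\zeta)B(X,Y)=0$ there, giving $\overline C=0$ exactly as you anticipated.
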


\begin{proof}
Let $(E_{1}=\xi,E_{2},...,E_{n})$ be a local $\widetilde{g}$-orthonormal basis
in $L$. Now $\zeta$ closed implies $\omega$ closed, and $B=0$ implies that for
any $X\in\mathcal{S}^{\zeta}$
\[
K(\Pi_{i})=\widetilde{K}(\Pi_{i})
\]
being $\Pi_{i}=span\{X,E_{i}\}$, $i=2,...,n$, \cite[Theorem 4.2]{GB}. Thus,%
\[
\widetilde{g}(\widetilde{R}_{E_{i}X}X,E_{i})=g(R_{E_{i}X}X,E_{i}%
),\ \ i=2,..,n.
\]

For $i=1$ we have $\widetilde{g}(\widetilde{R}_{\xi X}X,\xi)=0$ because $\xi$
is $\widetilde{g}$-parallel, \cite[Corollary 3.14]{GB}. This leads us to
$\widetilde{R}ic(X,X)>(n-2)c>0$ for any unitary $X\in\mathcal{S}^{\zeta}$.

Being $L$ closed and totally geodesic, it is complete. In fact, let
$\gamma:(a,b)\rightarrow L$ with $b<\infty$ be any $\widetilde{g}$-geodesic.
The hypothesis imply $\overline{C}(\gamma^{\prime},\gamma^{\prime})=0$, so
$\gamma$ is also a $g$-geodesic. Using $M$ is complete
$\gamma$ has an end in $M$, and $L$ closed embedded implies this
end belong to $L$, and is the end point of $\gamma$ in $L$, so $\gamma$ is
extensible in $(L,\widetilde{g})$.

The universal covering $\pi:N\rightarrow L$ is a direct product $N=\mathbb{R}%
\times P$ with the metric $\pi^{\ast}\widetilde{g}$ where $\partial_{t}$ is
identified with $\xi$ and $P$ with a leaf of $\mathcal{S}^{\zeta}$, see
\cite[Theorem 5.3]{GB}. Then $(P,\pi^{\ast}\widetilde{g})$ is a complete
Riemannian manifold with Ricci tensor greater than a positive constant. Myers'
theorem says that $P$ is compact and being simply connected, it is a sphere.
This prove the first part.

For the second part, observe that $P$ is a Riemannian manifold with
its natural metric ($\pi\circ i)^{\ast}g$ inherited from the ambient $(M,g)$, and being compact it has a closed geodesic
$\gamma$. It proyects to a spatial and closed $i^{\ast}g$-geodesic $\pi
\circ\gamma$ in a leaf of $S$. Using the second fundamental form of this leaf%
\[
\mathbb{I}^{S^{\zeta}}(X,Y)=C(X,Y)\xi+B(X,Y)N
\]
\cite[Equation (2.4)]{GB}, and using $B=0$ and $C(X,X)=0$ for any
$X\in\mathcal{S}^{\zeta}$ we conclude that $\pi\circ\gamma$ is a $g$-geodesic.
\end{proof}

\begin{theorem}
Let $(M,g)$ be a spatially complete Lorentzian manifold admitting a compact
totally geodesic null hypersurface $L$. Suppose it admits a rigging $\zeta$
such that the screen distribution $\mathcal{S}$ is integrable, the curvature
of any plane in $\mathcal{S}$ is greater than $c>0$ and $C(X,X)=0$ for any
$X\in\mathcal{S}$. Then there exists infinite periodic spatial geodesics.
\end{theorem}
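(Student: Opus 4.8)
The claim is about a spatially complete Lorentzian manifold with a compact totally geodesic null hypersurface $L$, a rigging $\zeta$ with integrable screen $\mathcal{S}$, positive transverse curvature bounded below by $c > 0$, and $C(X,X) = 0$ for screen vectors. We want infinite periodic spatial geodesics.

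**Key tools available:**
- Proposition \ref{prop3}: when $\mathcal{S}$ is integrable, $B = 0$, and $C(X,X) = 0$, then a curve in a leaf $S$ is a $\widetilde{g}$-geodesic iff $g$-geodesic iff $i^*g$-geodesic.
- The transverse curvature equals the ambient sectional curvature on screen planes (Proposition \ref{curvat}).
- The second fundamental form of a leaf: $\mathbb{I}^{S^\zeta}(X,Y) = C(X,Y)\xi + B(X,Y)N$.

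**Strategy (similar to the previous theorem):**

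The leaves $S$ of the integrable screen are submanifolds. With $C(X,X) = 0$ and $B = 0$, the second fundamental form of a leaf vanishes on diagonal entries — but we need the full second fundamental form structure. The key point: geodesics in a leaf are $g$-geodesics.

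Via Myers' theorem logic: the transverse curvature $> c > 0$ makes the leaves have positive Ricci curvature. If leaves are complete, Myers makes them compact, hence they contain closed geodesics, which project to periodic spatial $g$-geodesics.

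**The main obstacle:** proving completeness of the leaves and establishing that closed geodesics exist and are genuinely spatial/periodic in $M$.

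---

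The plan is to exploit the leaf structure of the integrable screen distribution $\mathcal{S}$ together with the positive transverse curvature bound, mirroring the strategy of the preceding theorem but using compactness of $L$ in place of a global splitting. First I would fix a leaf $S$ of $\mathcal{S}$ and observe that, since $\mathcal{S}$ is integrable, $B=0$, and $C(X,X)=0$ for all $X\in\mathcal{S}$, Proposition \ref{prop3} applies: a curve in $S$ is a $\widetilde{g}$-geodesic if and only if it is a $g$-geodesic if and only if it is an $i^{*}g$-geodesic. Thus the three relevant geodesic notions on the leaf coincide, and in particular any closed $i^{*}g$-geodesic in a leaf is automatically a periodic spatial $g$-geodesic in $M$, since the induced metric $i^{*}g$ on a leaf of the screen is Riemannian (spacelike).

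Next I would establish that the leaves carry enough positive curvature to invoke Myers' theorem. By Proposition \ref{curvat}, for unitary orthogonal $X,Y\in\mathcal{S}^{\zeta}$ the transverse sectional curvature satisfies $K^{T}(X,Y)=K(X,Y)$, the ambient sectional curvature of the screen plane, which by hypothesis exceeds $c>0$. Since the transverse geometry is exactly the geometry of the leaves with the rigged metric (as developed in Section~2), this gives a Ricci lower bound $\widetilde{Ric}^{S}(X,X)>(q-1)c>0$ on each leaf, where $q=\dim\mathcal{S}^{\zeta}$. The remaining ingredient is completeness of the leaves in the induced metric: here I would use the spatial completeness of $(M,g)$ together with Proposition \ref{prop3}, namely that $i^{*}g$-geodesics in a leaf are $g$-geodesics, so a maximal leaf geodesic extends as a spacelike $g$-geodesic in $M$ and, by spatial completeness, is defined on all of $\mathbb{R}$; hence $(S,i^{*}g)$ is a complete Riemannian manifold.

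With completeness and the Ricci bound in hand, Myers' theorem forces each leaf $S$ to be compact with finite fundamental group. A compact Riemannian manifold always carries a closed geodesic, so each leaf contains a closed $i^{*}g$-geodesic, which by the equivalence above is a periodic spatial $g$-geodesic in $M$. To obtain \emph{infinitely many} such periodic geodesics I would argue as in the earlier theorems: compactness of $L$ combined with the one-parameter family of distinct leaves (the flow of $\xi$ moves one leaf to another, and the closed geodesic is transported along) produces a continuum, hence infinitely many, of periodic spatial geodesics; alternatively one may invoke that a compact simply connected or finitely-covered Riemannian manifold of positive curvature has infinitely many geometrically distinct closed geodesics only in special cases, so more safely the infinitude comes from the distinct leaves foliating the compact $L$.

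The hard part will be two-fold: first, making the completeness argument for the leaves airtight, since a leaf of an integrable distribution need not be closed in $L$ and the extension of leaf geodesics as ambient $g$-geodesics must be controlled purely by spatial completeness of $M$ (this is where Proposition \ref{prop3} does the essential work); and second, rigorously upgrading a single closed geodesic per leaf to \emph{infinitely many} periodic spatial geodesics in $M$, for which the cleanest route is to use the foliation of the compact $L$ by screen leaves and show that distinct leaves yield geometrically distinct closed geodesics, rather than relying on a multiplicity theorem for closed geodesics on a single compact manifold.
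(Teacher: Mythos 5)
Your proposal is correct and follows essentially the same route as the paper's own proof: Proposition \ref{prop3} to identify leaf geodesics with ambient $g$-geodesics, spatial completeness to conclude the leaves of $\mathcal{S}$ are complete, Myers' theorem to make them compact, a closed geodesic on each compact leaf, and the family of distinct leaves to produce infinitely many periodic spatial geodesics. The only difference is cosmetic: the paper obtains the infinitude by citing \cite{JohnsonWhitt80} to fiber the compact $L$ over $\mathbb{S}^{1}$ with the leaves as fibers, whereas you argue directly from the disjointness of the leaves; note also that both proofs are equally terse on why a maximal leaf geodesic, extended as an ambient geodesic by spatial completeness, remains inside its (possibly non-closed) leaf, a subtlety you at least flag explicitly.
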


\begin{proof}
We can write%
\[
\nabla_{X}^{L}Y-\widetilde{\nabla}_{X}Y=C(X,Y)\xi
\]
\cite[Remark 3.8]{GB}, so given a $\widetilde{g}$-geodesic $\gamma :I\rightarrow L$ with $\gamma
^{\prime}(0)\in\mathcal{S}$, applying Proposition \ref{prop3} we have that it is
a $g$-geodesic, so by the hypothesis, the leaves of $\mathcal{S}$ are complete
with the induced metric $i^{\ast}g$. The condition on the curvature implies,
using Myer's theorem, that the leaves of $\mathcal{S}$ are compact. Then $M$ fibers over $\mathbb{S}^{1}$, see \cite{JohnsonWhitt80}. Take $S$ a fiber, so it is also a leaf of the screen distribution. Because $S$ is compact there exists a periodic $i^*g$-geodesic which is a periodic $g$-geodesic in $M$ by Proposition \ref{prop3}.
\end{proof}

\begin{theorem}
	\label{thm1}
	Let $(M,g)$ be a chronological 3-dimensional Lorentzian manifold admitting a complete timelike killing vector field $\zeta$ such that $K(\Pi)\geq 0$ for all timelike plane $\Pi$ containing $\zeta$. Suppose
	aditionally that the NCC holds on $M$ and $(M,g)$ is null and spacelike complete. Then at least one of the following holds
	\begin{itemize}
		\item $(M,g)$ is causally continuous
		\item  The universal covering of $M$ is diffeomorphic to $\mathbb{R}^3$
	\end{itemize}
\end{theorem}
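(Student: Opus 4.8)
The plan is to run the proof as a dichotomy on the causal ladder. Since causal continuity is precisely the first alternative, I would assume throughout that $(M,g)$ is \emph{not} causally continuous and work towards the second alternative. The whole strategy is to convert this causal defect into the existence of a (topologically) closed embedded totally geodesic null hypersurface $L$, because then the hypotheses of Proposition \ref{prop1} are met verbatim, and its proof yields more than its statement: it forces $\tau=0=d\omega$, $C=0$, hence $\nabla=\widetilde{\nabla}$ on $\mathfrak{X}(L)$, and it identifies the universal cover of $(L,\widetilde{g})$ with flat $\mathbb{R}^2$. The complete timelike Killing field $\zeta$, which is automatically transverse to any null hypersurface, is then used to manufacture the remaining $\mathbb{R}$ factor.

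The first task is to produce a null line. By the Hawking--Sachs characterization, a causally continuous spacetime is exactly a distinguishing and reflecting one, so the failure of causal continuity means $(M,g)$ is either non-distinguishing or non-reflecting. In either case I would run a limit-curve argument on the sequences of causal curves witnessing the defect: the reflectivity (resp.\ distinction) failure produces two points that are causally related as a limit but not chronologically related, whose limit curve is an inextendible achronal causal geodesic; since chronology excludes closed timelike curves and $M$ is null complete, this limit is a complete achronal null geodesic, i.e.\ a null line $\eta$. With $\eta$ in hand the argument is identical to that of Theorem \ref{closed geod}: the null convergence condition together with null completeness places $\eta$ inside a smooth, topologically closed, achronal, totally geodesic null hypersurface $L$ by \cite[Theorem~IV.1]{Ga}, and $\zeta$ restricts to a (closed) rigging for it.

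Next I would feed $L$ into Proposition \ref{prop1}, obtaining that $(L,\widetilde{g})$ is a complete flat surface whose universal cover is $\mathbb{R}^2$ and that $\nabla=\widetilde{\nabla}$ on $\mathfrak{X}(L)$. To pass from $L$ to $M$ I would use the flow $\phi_s$ of $\zeta$. As $\zeta$ is timelike it is nowhere tangent to the null hypersurface $L$, so the orbit map
\[
\Psi:\mathbb{R}\times L\longrightarrow M,\qquad \Psi(s,x)=\phi_s(x),
\]
is a local diffeomorphism (each $\phi_s$ is an isometry, and a dimension count gives full rank). One then checks that $\Psi(\mathbb{R}\times L)$ is open and, using completeness of the orbits together with the closedness of $L$, also closed, hence all of the connected manifold $M$; an analogous completeness and path-lifting argument upgrades $\Psi$ to a covering map. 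Therefore the universal cover of $M$ coincides with that of $\mathbb{R}\times L$, namely $\mathbb{R}\times\mathbb{R}^2=\mathbb{R}^3$, which is the second alternative.

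The main obstacle is the causal-theoretic step that converts the mere failure of causal continuity into a genuine null line: a limit-curve construction only delivers an inextendible achronal causal geodesic, and one must rule out that it is partially imprisoned or non-null and then invoke null completeness to make it a complete null line fit for \cite[Theorem~IV.1]{Ga}. A secondary, more technical difficulty is the surjectivity of $\Psi$, that is, that every $\zeta$-orbit meets the achronal hypersurface $L$; since $M$ need not be globally hyperbolic, $L$ need not separate $M$, so making $L$ behave as a Cauchy surface for the $\zeta$-congruence is exactly where the completeness hypotheses and the isometric character of the flow (so that each $\phi_s(L)$ is again a totally geodesic null hypersurface) must be exploited with care.
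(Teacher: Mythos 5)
Your overall skeleton matches the paper's (negate causal continuity, produce a null line, apply the null splitting theorem of \cite{Ga} to get an achronal closed totally geodesic null surface, feed it into Proposition \ref{prop1}, then split off an $\mathbb{R}$ factor), but the step where you manufacture the null line has a genuine gap. You propose to treat the two ways causal continuity can fail (non-distinguishing or non-reflecting) symmetrically, by a limit-curve argument producing an ``inextendible achronal causal geodesic.'' This does not work. First, achronality of a limit curve is not automatic, and turning a causality defect into a genuine null line is a deep result, not a routine limit-curve construction: it is precisely Minguzzi's theorem (\cite{Ming}, chronological spacetimes without lightlike lines are stably causal) that the paper invokes. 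Second, and more fatally, your dichotomy contains a case your method cannot close: a spacetime can be stably causal (hence distinguishing and chronological) yet non-reflecting, so not causally continuous, and for such a spacetime no theorem --- and no limit-curve argument --- guarantees a null line. The paper avoids this case entirely by first using the result of Javaloyes--S\'anchez \cite{JS}: a complete timelike Killing field forces $(M,g)$ to be reflecting. Hence failure of causal continuity can only be failure of distinction, which implies failure of stable causality, and only then does Minguzzi's theorem apply. This use of the Killing hypothesis is the key causal-theoretic idea, and it is missing from your argument.

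The second half also has a gap you partly acknowledge. The paper does not build the diffeomorphism $M\cong\mathbb{R}\times N$ by hand from the flow of $\zeta$; it cites \cite[Theorem 3.4]{IC} (Costa e Silva), which is exactly the statement that in a chronological spacetime with a complete timelike Killing field, an achronal closed hypersurface of this type splits the spacetime. In your construction, injectivity of $\Psi(s,x)=\phi_s(x)$ does follow from achronality of $L$ (a timelike orbit cannot meet an achronal set twice), but surjectivity --- that every $\zeta$-orbit meets $L$ --- is not settled by ``completeness of the orbits together with closedness of $L$'': the image of $\Psi$ is open, but its closedness is precisely the content of the cited theorem and requires a real argument since $L$ need not separate $M$. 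As written, your proof would have to reprove both Minguzzi's theorem and Costa e Silva's splitting theorem, and the sketches offered for each would fail.
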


\begin{proof}
	Suppose $(M,g)$ is not causally continuous. The existence of a complete timelike Killing vector field implies $(M,g)$ is reflecting, see \cite{JS}. So, by definition of causal continuity it cannot be distinguishing and therefore it is not stably causal. From \cite{Ming}, $(M,g)$ contains a null line. Since the NCC holds and $(M,g)$ is null complete, the null splitting theorem, see \cite{Ga}, implies that this null line is contained in an achronal topologically closed embedded totally geodesic null hypersurface (here a surface) $N$ of $M$.
	From Proposition \ref{prop1}, the universal covering of $N$ is $\mathbb{R}^2$. Since $N$ is achronal, from \cite[Theorem 3.4]{IC}, $M$ is diffeomorphic to $\mathbb{R}\times N$ and then the universal covering of $M$ is $\mathbb{R}^3$.
\end{proof}

\begin{corollary}
	\label{cor1}
	Let $(M,g)$ be a chronological stationary 3-dimensional  complete Lorentzian manifold. Suppose $(M,g)$ is flat and contains no periodic geodesic. Then  $(M,g)$ is causally continuous.
\end{corollary}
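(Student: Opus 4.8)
The plan is to argue by contradiction, leveraging Theorem \ref{thm1} together with Proposition \ref{prop1}. First I would verify that the hypotheses of Theorem \ref{thm1} hold: since $(M,g)$ is flat every sectional curvature vanishes, so in particular $K(\Pi)=0\geq 0$ for every timelike plane $\Pi$ containing $\zeta$, and $Ric=0$ forces the null convergence condition; stationarity furnishes a timelike Killing field $\zeta$, which (being the infinitesimal generator of the stationary flow on a geodesically complete manifold) we take to be complete, while geodesic completeness of $M$ yields both null and spacelike completeness. Thus Theorem \ref{thm1} applies, and it suffices to discard its second alternative.

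Assume then, for contradiction, that $(M,g)$ is \emph{not} causally continuous. Retracing the proof of Theorem \ref{thm1}, the complete timelike Killing field makes $M$ reflecting, so failure of causal continuity forces $M$ to be non-distinguishing, hence not stably causal; by \cite{Ming} the manifold $M$ contains a null line, and the null splitting theorem \cite{Ga} (using the NCC and null completeness) places this line inside an achronal, topologically closed, embedded, totally geodesic null surface $N$ with $M$ diffeomorphic to $\mathbb{R}\times N$. Proposition \ref{prop1} then identifies $(N,\widetilde{g})$ as a complete flat surface all of whose $\widetilde{g}$-geodesics are $g$-geodesics of $M$.

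Now I would exploit the assumption that $M$ carries no periodic geodesic. Since the $\widetilde{g}$-geodesics of $N$ are $g$-geodesics of $M$, the surface $N$ can support no closed geodesic. Among complete flat surfaces, every proper quotient of $\mathbb{R}^2$ by a nontrivial discrete group of Euclidean isometries (the cylinder, the Möbius band, the torus, the Klein bottle) contains a closed geodesic, so the only remaining possibility is $N\cong\mathbb{R}^2$. Consequently $M\cong\mathbb{R}\times N\cong\mathbb{R}^3$ is simply connected; a simply connected, geodesically complete, flat Lorentzian manifold is isometric to Minkowski space $\mathbb{R}^{1,2}$. But Minkowski space is globally hyperbolic, hence causally continuous, contradicting our standing assumption. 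Therefore $(M,g)$ must be causally continuous.

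The delicate points I expect are, first, justifying completeness of the stationary Killing field from geodesic completeness of $M$; second, the classification input that a complete flat surface without closed geodesics is necessarily the plane; and third, the rigidity step promoting \textquotedblleft simply connected, complete, flat\textquotedblright\ to \textquotedblleft isometric to $\mathbb{R}^{1,2}$.\textquotedblright\ The crux that makes the whole argument close is that Proposition \ref{prop1} together with the achronality of $N$ delivers $M\cong\mathbb{R}\times N$ with $N\cong\mathbb{R}^2$, so that $M$ \emph{itself}, and not merely its universal cover, is simply connected; this is essential, since a nontrivial quotient of Minkowski space need not be causally continuous.
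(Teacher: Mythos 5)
Your proposal follows the same overall route as the paper: assume causal continuity fails, retrace the proof of Theorem \ref{thm1} to produce an achronal, topologically closed, totally geodesic null surface $N$ with $M$ diffeomorphic to $\mathbb{R}\times N$, use Proposition \ref{prop1} together with the absence of periodic geodesics to force $N\cong\mathbb{R}^2$, and conclude that $M$, being diffeomorphic to $\mathbb{R}^3$, flat and complete, must be Minkowski space --- a contradiction. There are two lemma-level differences. First, to rule out the nontrivial flat surfaces you invoke the classification of complete flat surfaces (cylinder, M\"obius band, torus, Klein bottle all carry closed geodesics), while the paper uses the Cheeger--Gromoll soul theorem \cite{CG}: if the soul of the surface is not a point, it contains a closed geodesic, which would be a periodic geodesic of $(M,g)$. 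Both are correct; yours is more elementary, the paper's avoids appealing to a classification. Second --- and this is the only real weak point --- your treatment of the completeness of the Killing field $\zeta$, namely ``being the infinitesimal generator of the stationary flow on a geodesically complete manifold we take it to be complete,'' is not an argument. In Lorentzian signature the familiar Riemannian proof (constancy of $g(\zeta,\zeta)$ along orbits plus Hopf--Rinow) is unavailable, so geodesic completeness does not hand you completeness of Killing fields by the standard route; you flag this as delicate but never discharge it. The claim is in fact true, by the Kobayashi--Nomizu theorem that every infinitesimal affine transformation of a manifold with a complete linear connection is complete, but you must either cite that result or argue as the paper does: since $(M,g)$ is flat and complete, its universal cover is Minkowski space; the lift of $\zeta$ is a Killing field there, hence of affine form (a translation or an element of $\mathfrak{o}_1(3)$), so its flow is complete, and completeness descends to $\zeta$. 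With either justification inserted, your proof closes correctly and is essentially the paper's.
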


\begin{proof}
	Suppose $(M,g)$ is not causally continuous. Call $\zeta$ the timelike Killing vector field. Its lift to the universal covering of $M$ is a Killing vector field of the Minkowski space, so it is a translation or an element of $\mathfrak{o}_1(3)$ which implies that $\zeta$ is complete. The hypothesis on the curvature in the above theorem are satisfied, then its proof and Proposition \ref{prop1} show that $M$ is diffeomorphic to $\mathbb{R}\times L$ where $L$ is a complete flat surface. By the Cheeger-Gromoll soul Theorem, see \cite{CG}, if the soul of $L$ is not a point then it contains a closed geodesic which should also be a closed geodesic of $(M,g)$, contradiction since $M$ admits no closed geodesic. So the soul of $L$ is a point and in this case $L$ is diffeomorphic to $\mathbb{R}^2$. It follows that $M$ is diffeomorphic to $\mathbb{R}^3$. But since $(M,g)$ is flat and complete it is isometric to the 3-dimensional Minkowski space which is causally continous. Contradiction.
\end{proof}

\textbf{Acknowledgement.} This work was supported by the  Ministry of Sciences and Innovation of Spain (MICINN) I+D+I, grant number PID2020-118452GB-I00. The first author acknowledge Junta de Andalucía, Proyecto de excelencia PROYEXCEL-00827.

%% If you have bib database file and want bibtex to generate the
%% bibitems, please use
%%
%%  \bibliographystyle{elsarticle-harv}
%%  \bibliography{<your bibdatabase>}

%% else use the following coding to input the bibitems directly in the
%% TeX file.

%% Refer following link for more details about bibliography and citations.
%%

\end{document}